\colorlet{siaminlinkcolor}{green!50!black}
\colorlet{siamexlinkcolor}{red!50!black}
\newcommand{\rd}{{\mathrm d}}
\newcommand{\rP}{{\mathrm{P}}}
\newcommand{\vx}{{\bf x}}
\newcommand{\vy}{{\bf y}}
\newcommand{\vM}{{\bf M}}
\newcommand{\vPhi}{{\mbox{\boldmath$\Phi$}}}
\newcommand{\calB}{{\cal B}}
\newcommand{\calE}{{\cal E}}
\newcommand{\calF}{{\cal F}}
\newcommand{\calH}{{\cal H}}
\newcommand{\calL}{{\cal L}}
\newcommand{\calN}{{\cal N}}
\newcommand{\calP}{{\cal P}}
\newcommand{\calU}{{\cal U}}
\newcommand{\argmin}{\operatornamewithlimits{argmin}}
\newcommand{\Eb}{\mathbb{E}}
\newcommand{\Pb}{\mathbb{P}}
\newcommand{\Qb}{\mathbb{Q}}
\newcommand{\Rb}{\mathbb{R}}
\newcommand{\Zb}{\mathbb{Z}}
\newcommand{\F}{\mathscr{F}}
\newcommand{\A}{\mathscr{A}}
\newcommand{\vm}{{\bf m}}
\newcommand{\vTheta}{{\mbox{\boldmath$\Theta$}}}
\renewcommand{\maketag@@@}[1]{\hbox{\m@th\normalsize\normalfont#1}}%
\newtheorem{theorem}{Theorem}[section]
\newtheorem{prop}[theorem]{Proposition}
\newtheorem{definition}{Definition}[section]
    \newcommand\email[1]{\_email #1\q_nil}
    \def\_email#1@#2\q_nil{%
      \href{mailto:#1@#2}{{\emailfont #1\emailampersat #2}}
    }
    \newcommand\emailfont{}
    \newcommand\emailampersat{\small@}
\newcommand{\vast}{\bBigg@{3.5}}
\newcommand{\Vast}{\bBigg@{4}}
\newcommand{\vastt}{\bBigg@{4.5}}
\newcommand{\Vastt}{\bBigg@{5}}
\newcommand{\beginsupplement}{
        \setcounter{table}{0}
        \renewcommand{\thetable}{S\arabic{table}}
        \setcounter{figure}{0}
        \renewcommand{\thefigure}{S\arabic{figure}}
        \setcounter{section}{0}
        \renewcommand{\thesection}{S\arabic{section}}
        \setcounter{equation}{0}
        \renewcommand{\theequation}{S\arabic{equation}}
        \setcounter{algorithm}{0}
        \renewcommand{\thealgorithm}{S\arabic{algorithm}}
     }
\setlist[enumerate]{
  label={\upshape(\roman*)},
  leftmargin=30pt,
  labelwidth=0pt,
  align=right,
  nosep
}
\begin{document}
	
	\title{
	\rule{\linewidth}{4pt}\vspace{0.3cm} \Large \textbf{
	  Spatio-Temporal Stochastic Optimization: \\ Theory and Applications to Optimal  Control and Co-Design 
	}\\ \rule{\linewidth}{1.5pt}}
	\author{Ethan N. Evans$^{a,}\thanks{Corresponding Author. Email: \email{eevans41@gatech.edu}}~$, Andrew P. Kendall$^{a}$, George I. Boutselis$^{a}$, \\ and Evangelos A. Theodorou$^{a,b}$\\ \vspace{-0.1cm}
	\small{$^a$Georgia Institute of Technology, Department of Aerospace Engineering} \\ \vspace{-0.2cm}
	\small{$^b$Georgia Institute of Technology, Institute of Robotics and Intelligent Machines} }
	
	\date{\small{This manuscript was compiled on \today}}
	
	\maketitle

\begin{abstract}
There is a rising interest in Spatio-temporal systems described by Partial Differential Equations (PDEs) among the control community. Not only are these systems challenging to control, but the sizing and placement of their actuation is an NP-hard problem on its own. Recent methods either discretize the space before optimziation, or apply tools from linear systems theory under restrictive linearity assumptions. In this work we consider control and actuator placement as a coupled optimization problem, and derive an optimization algorithm on Hilbert spaces for nonlinear PDEs with an additive spatio-temporal description of white noise. We study first and second order systems and in doing so, extend several results to the case of second order PDEs. The described approach is based on variational optimization, and performs joint RL-type optimization of the feedback control law and the actuator design over episodes. We demonstrate the efficacy of the proposed approach with several simulated experiments on a variety of SPDEs.
\end{abstract}


\section{Introduction}\label{sec:intro}
In many complex natural processes, a variable such as temperature or displacement has values that are time varying on a spatial continuum over which the system is defined. These spatio-temporal processes are typically described by \acp{PDE} and are increasingly prevalent throughout the robotics community. Swarm robotics can be described by Reaction-advection-diffusion \acp{PDE}~\cite{elamvazhuthi2018pde}. Robot navigation in crowded environments can be described by Nagumo-like \acp{PDE}~\cite{aidman2008coupled}. Soft robotic limbs can be modelled as damped Euler-Bernoulli systems~\cite{shapiro2015modeling}.

Some of the major control-related challenges of these systems include dramatic under-actuation, high system dimensionality, and the design and/or placement of distributed actuators over a continuum of potential locations. These systems often have significant time delay from a control signal, and can have several bifurcations and multi-modal instabilities. In addition, realistic representations of these systems are stochastic. Put together, control of spatio-temporal systems represents many of the largest current-day challenges facing the robotics and automatic control community.

This paper addresses stochastic optimal control and co-design of \acp{SPDE} through the lens of stochastic optimization. We propose a joint actuator placement and policy network optimization strategy via episodic reinforcement that leverages inherent spatio-temporal stochasticity in the dynamics for optimization. The resulting stochastic gradient descent approach bootstraps off the widespread success of SGD methods such as ADAM for training \acp{ANN}. 

Among the main goals of this line of research is to establish capabilities for the eventual design and manufacture of soft-body robots. The behavior of such systems follows second order \acp{SPDE}. As such, while the proposed method is general to first and second order systems, we focus our mathematical formulation on second order \ac{SPDE}.
\newline

\noindent\textbf{Our Contributions.} 
In this work we tackle the coupled challenge of policy optimization and actuator co-design for \acp{SPDE}. Our approach is founded on a general principle coming from thermodynamics that also has had success in stochastic optimal control literature~\cite{TheodorouCDC2012}
\begin{equation}\label{eq:Free_Energy_Relative_Entropy}
\text{Free Energy} \leq \text{Work} - \text{Temperature} \times \text{Entropy}.
\end{equation}
We leverage this principle in order to derive a measure-theoretic loss function that utilizes exponential averaging over importance sampled system trajectories in order to choose network and actuator design parameters that simultaneously minimize state cost and control effort. This work builds off related work \cite{Evans2019IDVRL} by including actuator co-design and establishing needed theory for applicability to second order \acp{SPDE}. Specifically, we contribute the following:
\newline
\begin{enumerate}
    \item A derivation of a joint policy optimization and actuator co-design architecture in Hilbert spaces for \acp{SPDE}
    \item A version of the Girsanov Theorem suitable for second-order \acp{SPDE} in a natural form
    \item A practical set of tools to extend related work to actuator co-design and to handle second-order SPDEs
\end{enumerate}


\section{Related Work}
\noindent\textbf{Control of \acp{SPDE}.} There is a growing body of work that seek control of \acp{PDE} by immediately reducing them to a set of ODEs~\cite{bieker2019deep,nair2019cluster,mohan2018deep,morton2018deep,rabault2019artificial}. They do not consider stochasticity and typically use standard tools from finite-dimensional machine learning and control theory. Other approaches apply \ac{SPDE} control theory, yet either treat linear systems or lack numerical results~\cite{StochasticBurgers_1999, moura2013optimal}. We build the proposed framework on \cite{Evans2019IDVRL}, wherein the authors create a semi-model-free reinforcement learning framework for policy-based control of \acp{SPDE}. This related work does not consider actuator co-design and does not give a mathematical treatment suitable to second-order \acp{SPDE}. 


\vspace{1em}
\noindent\textbf{Actuator Co-Design for \acp{PDE}.}  In \cite{yang2017optimal}, theoretical results for the optimal actuator location for minimum norm control of the stochastic heat equation are obtained, however, without a numerical demonstration. Optimal co-design techniques for actuator and sensor locations with $H_\infty$ and $H_2$ design objectives were demonstrated on flexible structures \cite{lim1992method,nestorovic2013optimal,kasinathan2013h}, and the linearized Ginzburg-Landau equation \cite{chen2011h,manohar2018optimal}, but are only applicable to linear systems. Similarly, symmetry properties and linear control theory are applied to chaotic systems in \cite{grigoriev1997pinning}. Optimal actuator and sensor placement for scalar control of the linear advection equation are demonstrated in \cite{sinha2013optimal,vaidya2012actuator} using Gramian based approaches which also requires system linearity. Similarly, \cite{amstutz2006new} provides a level set method that promises scalability, yet relies on linear system Grammians. Conditions for the existence of optimal actuator placement for semilinear PDEs are obtained in \cite{edalatzadeh2019optimal}.  
Optimal actuator and sensor placement for stabilization of the nonlinear Kuramoto-Sivashinsky equation is demonstrated in \cite{lou2003optimal}, however they impose strong simplifying assumptions which limit their dimensionality. 


\section{Problem Formulation}
\label{sec:formulation}

This work proposes actuator co-design optimization for a large class of stochastic spatio-temporal systems represented as \acp{SPDE}. We describe these systems as evolving on time separable Hilbert spaces, where they are represented by infinite dimensional vectors and acted on by operators. We address a class of \acp{SPDE} that are of \textit{semi-linear} form. Let $\calH$ denote a separable Hilbert space with $\sigma$-field $\calB(\calH)$ and probability space $(\Omega, \calF, \rP)$ with filtration $\calF_t$, $t\in [0,T]$. Consider the general semi-linear form of a controlled \ac{SPDE} on $\calH$ given by 
\begin{align}\label{eq:SPDEs_Control}
\rd X = \big( \A X  + F(t, X)  \big) \rd t + G(t, X)\big(\vPhi(t,X,\vx; \vTheta^{(k)})\rd t+ \frac{1}{\sqrt{\rho}} \rd  W(t)\big), 
\end{align}
where $X(t) \in \calH$ is the state of the system which evolves on the Hilbert space $\calH$, the linear and nonlinear measurable operators $\A: \calH \rightarrow \calH$ and $F(t,X): \Rb \times \calH \rightarrow \calH$  (resp.) are uncontrolled drift terms, $\vPhi(t,X,\vx; \vTheta^{(k)}):\Rb \times \calH \times D \rightarrow \calH$ is the nonlinear control policy parameterized by $\vTheta^{(k)}$ at the $k^{th}$ iteration, where $D \subset \Rb^3$ is the domain of the finite spatial region, $\rd W(t):\Rb \rightarrow \calH$  is a spatio-temporal noise process, and $G(t,X)$ is a nonlinearity that affects both the noise and the control. It is used to incorporate the effects of actuation on the field. 

The \textit{Hilbert spaces} formulation given in \cref{eq:SPDEs_Control} is general in that any semi-linear \ac{SPDE} can be described in this form by appropriately choosing the $\A$ and $F$ operators. In this form, the spatio-temoral noise process $\rd W(t)$ is a Hilbert space-valued Wiener process, which is a generalization of the Wiener process in finite dimensions. We include a formal definition of a Wiener process in Hilbert spaces for clarity \cite[Section 4.1.1]{da2008stochastic}
\begin{definition} A $\calH$-valued stochastic process $W(t)$ is called a Wiener process if
\begin{enumerate}
    \item $W(0) = 0$
    \item $W$ has continuous trajectories
    \item $W$ has independent increments
    \item $\calL\big(W(t) - W(s)\big) = \calN (0, t-s)Q), \quad t \geq s \geq 0$
    \item $\calL\big(W(t)\big) = \calL\big(-W(t)\big), \quad t \geq 0$
\end{enumerate}
\end{definition}

\begin{prop}
Let $ \{e_{i}\}_{i=1}^{\infty} $  be a complete orthonormal system for the Hilbert Space $\calH$. Let $Q$ denote the covariance operator of the Wiener process $W(t)$. Note that $Q$ satisfies $Q e_{i}= \lambda_{i} e_{i}$, where $\lambda_{i}$ is the eigenvalue of $Q$ that corresponds to eigenvector $e_{i}$. Then, $W(t) \in \calH$ has the following expansion:
 \begin{equation}\label{eq:Wiener_expansion}
    W(t) = \sum_{j=1}^{\infty} \sqrt{\lambda_{j}} \beta_{j}(t) e_{j},
\end{equation}
\noindent where  $ \beta_{j}(t)  $  are real valued Brownian motions that are mutually independent on $ (\Omega, \F, \mathbb{P})$.
\end{prop}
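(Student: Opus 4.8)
The plan is to recognize the claimed series as the Karhunen--Loève expansion of $W(t)$ against the eigenbasis $\{e_j\}$ of the covariance operator $Q$, and then to verify term by term that the coordinate processes are independent standard Brownian motions, while controlling the tail of the series via the trace-class property of $Q$. Since $\{e_j\}_{j=1}^\infty$ is a complete orthonormal system, every realization expands as $W(t) = \sum_{j} \langle W(t), e_j\rangle\, e_j$ in $\calH$. I would therefore \emph{define} the scalar processes $\beta_j(t) := \lambda_j^{-1/2}\langle W(t), e_j\rangle$ for each index with $\lambda_j > 0$; indices with $\lambda_j=0$ contribute nothing, since the corresponding projection has zero variance and hence vanishes almost surely. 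With this definition \cref{eq:Wiener_expansion} holds formally, so it remains to identify the $\beta_j$ and to justify convergence.

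Next I would check that each $\beta_j$ is a standard real Brownian motion directly from the five defining properties of $W$. Property (1) gives $\beta_j(0)=0$; property (2) gives continuity of $t\mapsto\beta_j(t)$, since projection onto a fixed $e_j$ is continuous; property (3) transfers to independent increments of $\beta_j$ because the projection is linear; and property (4) yields that $\langle W(t)-W(s), e_j\rangle$ is centered Gaussian with variance $(t-s)\langle Q e_j, e_j\rangle = (t-s)\lambda_j$, so that $\beta_j(t)-\beta_j(s)$ is distributed as $\calN(0,\, t-s)$. Hence each $\beta_j$ is a standard Brownian motion.

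For mutual independence, I would use that the whole family $\{\langle W(t), e_j\rangle\}_{j,t}$ is jointly Gaussian, since $W$ has Gaussian increments and finite-dimensional projections of Gaussian laws are Gaussian; it therefore suffices to check that distinct coordinates are uncorrelated. A direct covariance computation gives, for $i\neq j$, $\Eb\big[\langle W(t), e_i\rangle\,\langle W(s), e_j\rangle\big] = (t\wedge s)\,\langle Q e_i, e_j\rangle = (t\wedge s)\,\lambda_i\,\langle e_i, e_j\rangle = 0$ by orthonormality, whence $\beta_i$ and $\beta_j$ are independent.

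Finally I would establish convergence of the partial sums $W_N(t):=\sum_{j\le N}\sqrt{\lambda_j}\,\beta_j(t)\,e_j$ to $W(t)$ in $L^2(\Omega;\calH)$ by computing $\Eb\|W(t)-W_N(t)\|^2 = t\sum_{j>N}\lambda_j$, which tends to $0$ exactly because $Q$ is trace class, $\sum_j \lambda_j = \tr(Q) < \infty$. I expect this last point to be the main obstacle: the expansion is genuinely meaningful only once one invokes the nuclear (trace-class) property of the covariance operator to guarantee summability of the eigenvalues, which is also the place where the honest $Q$-Wiener process must be distinguished from a merely cylindrical one. The independence step is the second delicate point, resting on the upgrade from uncorrelatedness to independence that is valid only because the family is jointly Gaussian.
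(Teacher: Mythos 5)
The paper does not prove this proposition at all: it states it and defers to the cited reference (Da Prato--Zabczyk, Section 4.1.1), so there is no in-paper argument to compare against. Your proof is correct and is essentially the canonical Karhunen--Lo\`eve argument from that reference: defining $\beta_j(t) := \lambda_j^{-1/2}\langle W(t), e_j\rangle$, reading off the Brownian properties from the definition of $W$, upgrading pairwise uncorrelatedness to mutual independence via joint Gaussianity, and controlling the tail through $\tr (Q) < \infty$. Your closing remark that the trace-class hypothesis is exactly what separates this expansion from the cylindrical case is also precisely the point the paper makes immediately after the proposition, where it observes that for $Q = I$ the series is unbounded in $\calH$.
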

This expansion in \cref{eq:Wiener_expansion} reveals how the Wiener process acts spatially. There are various forms of the Wiener process with different properties. We refer the interested reader to \cite{da2008stochastic} for a more complete introduction. The proposed approach is derived for a special case of Wiener process called the Cylindrical Wiener process, defined as follows.
\begin{definition}
A Wiener process $W(t)$ on $\calH$ is called a Cylindrical Wiener process if the covariance operator $Q$ is the identity operator $I$.
\end{definition}

Note that for the Cylindrical Wiener process, the sum in \cref{eq:Wiener_expansion} is unbounded in $\calH$ since $\lambda_j = 1$, $\forall j=1,2,\dots$. 
This makes the Cylindrical Wiener process a challenging Wiener process to handle since it acts spatially  \textit{everywhere} with \textit{equal} magnitude, in contrast to Wiener processes with covariance operators that are of trace class (i.e. wherein the expansion \cref{eq:Wiener_expansion} is finite). This type of spatio-temporal noise requires the assumption that the operators $\A$, $F(t, X)$, and $G(t,X)$ satisfy properly formulated conditions given in \cite[Hypothesis 7.2]{da2008stochastic} to guarantee the existence and uniqueness of the $\calF_t$-adapted weak solution $X(t)$, $t \geq 0$.

The nonlinear policy $\vPhi(t,X,\vx; \vTheta^{(k)})$ is a potentially time-varying policy that has explicit state dependence. Nonlinear, explicit state dependence allows for a feedback policy that can extract pertinent information from the state for control. In this work the nonlinear policy utilizes an \ac{ANN}. Embedded in this function is also a spatial dependence. The dependence on $\vx$ describes how the actuator is placed in the spatial domain. This approach encompasses cases where terms that parametrize how the actuators are shaped or sized are included in the nonlinear policy.

Many complex spatio-temporal systems are given by partial differential equations of second order in time. One such system is the simply supported stochastic Euler-Bernoulli equation with Kelvin-Voigt and viscous damping, which can describe the motion of a soft robotic limb. Formally, this is given by
\begin{subequations} \label{eq:EB_SPDE}
\begin{align} 
&\partial_{tt} y + \partial_{xx} \big( \partial_{xx}y + C_d \partial_{xxt} y \big) + \mu \partial_t y = \vPhi + \frac{1}{\sqrt{\rho}} \partial_t W(t,x) \\
&y(t,0) = y(t,a) = 0, \\
&y(0,x) = y_0, \\
&\partial_t y(0,x) = v_0 \\
&\partial_{xx}(t,0) + C_d \partial_{xxt}y(t,0) = 0, \\
&\partial_{xx}(t,a) + C_d \partial_{xxt} y(t,a) = 0,
\end{align}
\end{subequations}
where the spatial region is one-dimensional, $y(x,t)=y: \Rb^n \times \Rb \rightarrow \Rb$ represents the vertical displacement of the beam, and all functional dependencies of the nonlinear policy $\vPhi$ have been dropped since it has a different form in the PDE perspective. With the change of variables $v := \partial_t y $, this system has the typical second order matrix form
\begin{align} \label{eq:EB_matrix_form}
     \partial_t \left[\begin{array}{c} y \\ v \end{array} \right] = & \left[\begin{array}{cc} 0 & 1 \\ -A_0 & -C_d A_0 - \mu \end{array} \right] \left[ \begin{array}{c} y \\ v \end{array} \right ] + \left[ \begin{array}{c} 0 \\ 1 \end{array} \right] \Phi  + \left[ \begin{array}{c} 0 \\ \frac{1}{\sqrt{\rho}} \end{array} \right] \partial_t W(t,x) 
\end{align}
where $A_0 = \partial_{xxxx}$ without boundary conditions. Now, we lift this PDE into infinite dimensional Hilbert spaces. Define $Y \in \calH$ as the Hilbert space analog of $y(x,t)$, $V \in \calH$ as the Hilbert space analog of $v(x,t)$, and a variable $Z$ on the direct product Hilbert space $\calH^2 := \calH \times \calH$. Note that $Z$ is a Hilbert space analog of a variable $z(x,t) = [y(x,t) \;\; v(x,t)]^\top \in \Rb^2$. In Hilbert spaces, $A_0$ becomes an operator acting on $\calH$ and $1$ gets replaced by the identity operator $I$ acting on $\calH$. Rewriting \cref{eq:EB_matrix_form} in Hilbert space semi-linear form yields
\begin{equation} \label{eq:EB_Hilbert_form}
    \rd Z = A Z \rd t + G\Big( \vPhi(t,Z,\vx; \vTheta^{(k)})\rd t + \frac{1}{\sqrt{\rho}}\rd W(t) \Big)
\end{equation}
where $A: \calH^2 \rightarrow \calH^2$ is the linear operator $A = [0 \;\; I; -A_0 \;\; -C_d A_0 - \mu I]$, $G: \calH \rightarrow \calH^2$ is an operator representing how control and spatio-temporal noise enter the system $G = [0; I]$, and $\rd W(t)$ is a Cylindrical Wiener process on $\calH$. Note that the Hilbert space variables $Y$, $V$, and $Z$ no longer have spatial dependence as the Hilbert space vectors capture the spatial continuum over which the problem is defined.

\section{A Girsanov Theorem for Second Order SPDEs}
In order to derive a measure theoretic view of variational optimization, we require a change of measures coming from an appropriate Girsanov theorem for second order Hilbert space valued systems. We present the second order version of the Girsanov theorem here as an extension of existing formulations, where all norms and inner products are taken with respect to the Hilbert space $\calH$.

\begin{theorem}[Girsanov] \label{girs} Let $\Omega$ be a sample space with a $\sigma$-algebra $\mathcal{F}$. Consider the following $\calH^2$-valued nonlinear stochastic processes
\begin{align}
\rd Z&=\big(\A Z+F(t, Z)\big) \rd t +  \frac{1}{\sqrt{\rho}}G(t, Z)\rd W(t), \label{eq:Z}\\
\rd\tilde{Z}&=\big(\A \tilde{Z}+F(t, \tilde{Z})\big)\rd t+G(t, \tilde{Z})\bigg(\tilde{B}(t, \tilde{Z})\rd t + \frac{1}{\sqrt{\rho}} \rd W(t)\bigg),\label{eq:Z_tilde}
\end{align}
where $Z(0)=\tilde{Z}(0)=z_0$ and $W \in \calH$ is a Cylindrical Wiener process with respect to measure $\mathbb{P}$. Moreover, let $\Gamma$ be a set of continuous-time, infinite-dimensional trajectories in the time interval $[0,T]$. Now the {\it probability law} of $Z$ will be defined as $\mathcal{L}(\Gamma):=\mathbb{P}(\omega\in\Omega|Z(\cdot,\omega)\in\Gamma)$ . Similarly, the law of $\tilde{Z}$ is defined as $\tilde{\calL}(\Gamma):=\mathbb{P}(\omega\in\Omega|\tilde{Z}(\cdot,\omega)\in\Gamma)$. Then
\begin{equation} \label{eq:Lg}
\begin{split}
\tilde{\calL}(\Gamma) =  \mathbb{E}_{\mathbb{P}}\Bigg[ \exp\bigg(&\int_{0}^{T} \big\langle\psi(s),\rd W(s)\big\rangle -\frac{1}{2}\int_{0}^{T}\big|\big|\psi(s)\big|\big|^{2}\rd s\bigg) \Bigg| Z(\cdot)\in\Gamma\Bigg],
\end{split}
\end{equation}
where we have defined
\begin{equation}
    \psi(t):=\sqrt{\rho}\tilde{B}\big(t, Z(t)\big)\in \calH,
\end{equation}
and assumed
  $  \mathbb{E}_{\mathbb{P}}\big[e^{\frac{1}{2}\int_{0}^{T}||\psi(t)||^2\mathrm dt}\big]<+\infty$.
\end{theorem}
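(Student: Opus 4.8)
The plan is to realize the law $\tilde{\calL}$ as a reweighting of the law $\calL$ via an absolutely continuous change of probability measure, following the classical Girsanov strategy adapted to the Hilbert-space setting of \cite{da2008stochastic}. The structural observation that makes the argument go through in the \emph{second-order} case is that both the control-type drift perturbation and the noise enter \cref{eq:Z_tilde} through the \emph{same} operator $G$; consequently the extra drift $G(t,\tilde Z)\tilde B(t,\tilde Z)$ can be absorbed by a shift of the $\calH$-valued Wiener process, even though $G=[0\;;\,I]^\top$ renders the diffusion degenerate on $\calH^2$.

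First I would introduce the candidate Radon--Nikodym density as the stochastic exponential
\[
M_t := \exp\!\Big( \int_0^t \langle \psi(s), \rd W(s)\rangle - \tfrac12 \int_0^t \|\psi(s)\|^2\,\rd s \Big), \qquad \psi(t)=\sqrt{\rho}\,\tilde B\big(t,Z(t)\big),
\]
which is $\calF_t$-adapted because $Z$ is. Since $M_t$ is an exponential local martingale, the standing assumption $\mathbb{E}_{\mathbb{P}}[e^{\frac12\int_0^T\|\psi\|^2\rd t}]<+\infty$ is precisely the Novikov condition, and I would invoke it to upgrade $M_t$ to a genuine $\mathbb{P}$-martingale on $[0,T]$ with $\mathbb{E}_{\mathbb{P}}[M_T]=1$. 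This legitimizes defining a new probability measure $\mathbb{Q}$ on $(\Omega,\calF_T)$ by $\rd\mathbb{Q}/\rd\mathbb{P}=M_T$.

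Next I would apply the Hilbert-space Girsanov theorem to conclude that, under $\mathbb{Q}$, the shifted process $\hat W(t):=W(t)-\int_0^t\psi(s)\,\rd s$ is again a cylindrical Wiener process. Substituting $\rd W=\rd\hat W+\psi\,\rd t$ into \cref{eq:Z} and using the identity $\tfrac{1}{\sqrt{\rho}}\psi(t)=\tilde B(t,Z(t))$ collapses the dynamics of $Z$ into exactly the form \cref{eq:Z_tilde}, now driven by $\hat W$. By existence and uniqueness of $\calF_t$-adapted weak solutions under the standing hypotheses \cite[Hypothesis 7.2]{da2008stochastic}, the law of $Z$ under $\mathbb{Q}$ coincides with the law of $\tilde Z$ under $\mathbb{P}$; since both start at $z_0$, this identifies $\tilde\calL(\Gamma)=\mathbb{Q}(Z(\cdot)\in\Gamma)$. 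Unwinding the measure change gives $\mathbb{Q}(Z(\cdot)\in\Gamma)=\mathbb{E}_{\mathbb{P}}[M_T\,\mathbf{1}_{\{Z(\cdot)\in\Gamma\}}]$, which is \cref{eq:Lg}.

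I expect the main obstacle to be the rigorous invocation of Girsanov in the \emph{degenerate}, infinite-dimensional regime. Because $W$ is cylindrical ($Q=I$) the stochastic integral $\int_0^t\langle\psi,\rd W\rangle$ and the exponential martingale must be interpreted in the appropriate cylindrical sense, and because $G$ does not have full range on $\calH^2$ one must verify that the drift shift genuinely lies in the image of the noise channel—which it does by construction, since $\psi\in\calH$ and $G$ maps $\calH\to\calH^2$. Confirming that the Novikov condition suffices in the cylindrical case, and that the weak-uniqueness needed to match the two laws holds under the cited hypotheses, are the technical points requiring the most care.
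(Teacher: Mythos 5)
Your proposal is correct and takes essentially the same route as the paper's own proof: both define the shifted process $\hat W(t)=W(t)-\int_0^t\psi(s)\,\rd s$, use the Novikov-type integrability assumption to justify the exponential change of measure to a new measure $\Qb$ under which $\hat W$ is a cylindrical Wiener process (the paper outsources this step to \cite[Theorem 10.14]{da1992stochastic}), rewrite the dynamics of $Z$ so they match \cref{eq:Z_tilde} driven by $\hat W$, and then invoke uniqueness of solutions to identify $\tilde\calL(\Gamma)$ with $\Qb(\{Z\in\Gamma\})$ before unwinding the density. The only differences are expository---you make the Novikov/martingale mechanics and the degeneracy of $G$ explicit where the paper leaves them implicit in the cited reference.
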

\begin{proof}
Define the process
\begin{equation}
\label{eq:w_hat}
\hat{W}(t):= W(t)-\int_{0}^{t}\psi(s)\rd s.
\end{equation}
Under the above assumption, $\hat{W}$ is a Cylindrical Wiener process with respect to a measure $\Qb$ defined by
\begin{equation}
\label{eq:girsanov_measure}
\begin{split}
\rd \Qb (\omega)&=\exp\bigg(\int_{0}^{T}\big\langle\psi(s),\rd W(s)\big\rangle-\frac{1}{2}\int_{0}^{T}\big|\big|\psi(s)\big|\big|^{2}\rd s\bigg)\rd\mathbb{P} \\ &=\exp\bigg(\int_{0}^{T}\big\langle\psi(s),\rd \hat{W}(s)\big\rangle+\frac{1}{2}\int_{0}^{T}\big|\big|\psi(s)\big|\big|^{2}\rd s\big)\rd\mathbb{P}.
\end{split}
\end{equation}
The proof for this intermediate result can be found in \cite[Theorem 10.14]{da1992stochastic}. Now, using \cref{eq:w_hat}, \cref{eq:Z} is rewritten as
\begin{align}
\rd Z &= \big(\A  X+F(t, Z)\big)\rd t+\frac{1}{\sqrt{\rho}}G(t, Z)\rd W(t) \label{eq:Z_new0}  \\
     &=\big(\A  Z+F(t, Z)\big)\rd t+G(t, Z)\bigg(B(t,Z)\rd t+\frac{1}{\sqrt{\rho}}\rd\hat{W}(t) \bigg) \label{eq:Z_new1}
\end{align}
Notice that  the  SPDE in \cref{eq:Z_new1} has the same form as \cref{eq:Z_tilde}. Therefore, under the introduced measure $\Qb$ and noise profile $\hat{W}$, $Z(\cdot, \omega)$ becomes equivalent to $\tilde{Z}(\cdot, \omega)$. Conversely, under measure $\mathbb{P}$, \cref{eq:Z_new0} (or \cref{eq:Z_new1}) behaves as the original system in \cref{eq:Z}. In other words, \cref{eq:Z} and \cref{eq:Z_new1} describe the same system on $(\Omega, \mathcal{F}, \mathbb{P})$. From the uniqueness of solutions and the aforementioned reasoning, one has
\[\Pb\big(\{\tilde{Z}\in\Gamma\}\big) = \Qb\big(\{Z\in\Gamma\}\big).\]
The result follows from \cref{eq:girsanov_measure}.
\end{proof}

It follows that the {\it Randon-Nikodym} (RN) derivative between measures $\cal L(\cdot)$ and $\tilde{\calL}(\cdot)$ of the different dynamical systems defined in \cref{eq:Z,eq:Z_tilde} is given by
\begin{equation} \label{eq:RN}
\begin{split}
\frac{\rd \calL}{\rd \tilde{\calL}} = \exp\bigg(-\int_{0}^{T}\big\langle\psi(s), \rd W(s)\big\rangle-\frac{1}{2}\int_{0}^{T}||\psi(s)||^{2}\rd s\bigg).
\end{split}
\end{equation}
Applying this to both the general semi-linear system \cref{eq:SPDEs_Control} and the second order semi-linear form of the Euler-Bernoulli system \cref{eq:EB_Hilbert_form} yields $\psi:=\sqrt{\rho}\vPhi(t,Z, \vx; \vTheta^{(k)})$ and a Radon-Nikodym derivative as
\begin{equation} \label{eq:RN_semi_linear}
\begin{split}
\frac{\rd \calL}{\rd\tilde{\calL}} = \exp\bigg(-\sqrt{\rho}\int_{0}^{T}\big\langle\vPhi(t,Z,\vx; \vTheta^{(k)}), \rd W(s)\big\rangle -\frac{\rho}{2}\int_{0}^{T}||\vPhi(t,Z,\vx; \vTheta^{(k)})||^{2}\rd s\bigg).
\end{split}
\end{equation}
Throughout the rest of the manuscript, we refer to the terms in the Radon-Nikodym derivative many times. For convenience, they will be assigned functions $\calN(\vTheta, \vx) := \int_{0}^{T}\big\langle\vPhi(t,Z,\vx; \vTheta^{(k)}), \rd W(s)\big\rangle$, and $\calP(\vTheta, \vx) := \int_{0}^{T}||\vPhi(t,Z,\vx; \vTheta^{(k)})||^{2}\rd s$.


\section{Spatio-Temporal Stochastic Optimization}

The proposed measure theoretic framework is based on \cref{eq:Free_Energy_Relative_Entropy} in the following form~\cite{theodorou2015nonlinear,theodorou2018linearly}
\begin{align} \label{eq:Legendre}
  & - \frac{1}{\rho}   \log \Eb_{\calL} \bigg[ \exp( -\rho {J} )  \bigg]  = \min_{\calU(\cdot,\cdot)} \bigg[    \Eb_{\tilde{\calL}}\left({J} \right)  + \frac{1}{\rho} D_{KL} ( \tilde{\calL}\; \big|\big|  \calL )  \bigg],
\end{align}
where $J=J(X)$ is an arbitrary state cost function. Relating \cref{eq:Legendre} to \cref{eq:Free_Energy_Relative_Entropy}, the metaphorical work and entropy describe a metaphorical energy landscape for which there is a minimizing measure. Sampling from this measure would simultaneously minimize state cost and the $KL$-divergence term, which is interpreted as control effort. The measure that optimizes \cref{eq:Legendre} is the so-called Gibbs measure
\begin{equation}\label{eq:Gibbs}
\rd \calL^{*} = \frac{\exp( - \rho J) \rd \calL}{\Eb_\calL \big[\exp( - \rho J) \big] }.
\end{equation}

It is not known how to sample directly from the Gibbs measure in \cref{eq:Gibbs}. Instead, variational optimization methods seek to iteratively minimize the controlled distribution's distance to the Gibbs measure \cite{theodorou2018linearly, boutselis2019variational, Williams2016AggressiveDW}. Define the control policy and actuator co-design problem as
\begin{subequations}\label{eq:theta_and_x}
\begin{align}
        \vTheta^{*} &=  \argmin_{\vTheta}  D_{KL}(\calL^{*}|| \tilde{\calL}) \\
        \vx^{*} &= \argmin_{\vx} D_{KL}(\calL^{*}|| \tilde{\calL})
\end{align}
\end{subequations}

Throughout experiments, the authors found that a joint optimization problem dramatically outperforms the split problem in \cref{eq:theta_and_x}. To make this clear, define a new variable $\hat{\vTheta} := [\vTheta;\; \vx]^\top$, and with it the new joint variational optimization as
\begin{equation}
    \hat{\vTheta}^{*} = \argmin_{\hat{\vTheta}} D_{KL}(\calL^{*}|| \tilde{\calL}).
\end{equation}
Expanding the KL divergence and performing importance sampling yields
\begin{align}
    \hat{\vTheta}^{*} &= \argmin_{\hat{\vTheta}} \bigg[  \int \log \Big( \frac{ \rd \calL^*}{\rd \calL} \frac{\rd \calL}{\rd \tilde{\calL}} \Big)  \rd  \calL^* \bigg],
\end{align}
which is equivalent to minimizing
\begin{align}
    \hat{\vTheta}^{*} &= \argmin_{\hat{\vTheta}} \bigg[ \int \log \Big( \frac{\rd \calL}{\rd \tilde{\calL}} \Big) \rd  \calL^* \bigg].
\end{align}  
Performing importance sampling again yields
\begin{align}\label{eq:min_theta_x}
    \hat{\vTheta}^{*} &= \argmin_{\hat{\vTheta}} \bigg[\int \log \Big(\frac{\rd \calL}{\rd \tilde{\calL}} \Big)  \frac{\rd  \calL^*}{\rd \calL} \frac{\rd  \calL}{\rd \tilde{\calL}} \rd \tilde{\calL} \bigg].
\end{align}
The proposed iterative approach performs episodic reinforcement with respect to a loss function in order to optimize \cref{eq:min_theta_x}. Define the loss function as
\begin{align}\label{eq:def_loss_function}
    L(\hat{\vTheta}) &:= \Eb_{\tilde{\calL}}  \Bigg[\log \Big(\frac{\rd \calL}{\rd \tilde{\calL}} \Big)  \frac{\rd  \calL^*}{\rd \calL} \frac{\rd  \calL}{\rd \tilde{\calL}} \Bigg]
\end{align}
Plugging \cref{eq:Gibbs} and \cref{eq:RN} into \cref{eq:def_loss_function} yields
\small\begin{align}\label{eq:loss_function}
    L(\hat{\vTheta}^{(k)}) =  \Eb_{\tilde{\calL}}  \vast[ \frac{\exp( - \rho \tilde{J})}{\Eb_{\tilde{\calL}} \big[ \exp( - \rho \tilde{J}) \big]} \bigg( &-\sqrt{\rho} \calN(\hat{\vTheta}^{(k)}) - \frac{\rho}{2} \calP(\hat{\vTheta}^{(k)}) \bigg) \vast],
\end{align}\normalsize
where $\tilde{J}= \tilde{J}(Z_0^T, \hat{\vTheta}^{(k)})$ is defined as
\begin{equation}\label{eq:importance_sampled_cost}
    \tilde{J}(Z_0^T, \hat{\vTheta}^{(k)}) := J(Z_0^T) + \frac{1}{\sqrt{\rho}}\calN(\hat{\vTheta}^{(k)}) +\frac{1}{2}\calP(\hat{\vTheta}^{(k)}),
\end{equation}
and $J(Z_0^T)$ is a state cost evaluated over the state trajectory $Z_0^T$. For reaching tasks, $J(Z_0^T)$ is typically a weighted quadratic penalization of the 1-norm distance to the goal state.

This loss function compares sampled trajectories by evaluating them on the exponentiated $\tilde{J}$ performance metric. The importance sampling terms $\calN$ and $\calP$, which appear in $\tilde{J}$ add a quadratic control penalization term and a mixed control noise term. In the Loss function, they serve as weights for the exponentiated cost trajectories. For convenience, we denote the exponentiated cost term as $\calE :=  \Eb_{\tilde{\calL}} \big[ \exp( - \rho \tilde{J}) \big]^{-1}\exp( - \rho \tilde{J})$.

Recall, that the nonlinear policy $\vPhi$ is a functional mapping into Hilbert space $\calH$. This is kept general for derivation purposes, however it implies that the nonlinear policy affects controls each element of an infinite vector $Z \in \calH$. In a realistic context, there may not be control over how the actuation affects the field state $Z$.  A more realistic, but less general representation refines the policy as
\begin{equation}\label{eq:finite_actuation}
    \vPhi(t,Z,\vx;\vTheta^{(k)}) = \vm(\vx)^\top \varphi(Z ; \vTheta^{(k)} ),
\end{equation}
where $\vm(\vx):D^N \rightarrow \Rb^N \times \calH$ represents the effect of the actuation from $N$ actuators on the infinite-dimensional field. This function can take many forms, however in most applications it is either a Gaussian-like exponential with mean centered at the actuator locations, or an indicator function for actuation. 

In \cref{eq:finite_actuation}, $\varphi(X; \vTheta^{(k)}): \calH \rightarrow \Rb^N$ is a policy network with $N$ control outputs representing $N$ distributed (or boundary) actuators. Note that as desired, the tensor contraction given on the right hand side of \cref{eq:finite_actuation} produces a vector in $\calH$. Splitting the actuation function from the control signal is also desired because we ultimately wish to use a finite input, finite output policy network for the function $\varphi(X; \vTheta^{(k)})$. The inner product terms become
\begin{align}
    \calN(\hat{\vTheta}^{(k)}) &= \int_{0}^{T}\big\langle\vm(\vx)^\top \varphi(Z ; \vTheta^{(k)} ), \rd W(s)\big\rangle \label{eq:noise_inner_product} \\
    \calP(\hat{\vTheta}^{(k)}) &= \int_{0}^{T}||\vm(\vx)^\top \varphi(Z ; \vTheta^{(k)} )||^{2}\rd s \nonumber \\
    &= \int_{0}^{T}\big\langle \varphi(Z ; \vTheta^{(k)} ), \vM(\vx)\varphi(Z ; \vTheta^{(k)} ) \big\rangle \rd s \label{eq:policy_inner_product}
\end{align}
where $\vM(\vx) = \vm(\vx) \vm(\vx)^\top$.


\section{Approximate Discrete Optimization}

Performing spatio-temporal stochastic optimization in Hilbert spaces as described above maintains generality of the resulting loss function. In addition, it allows us to make use of the spatio-temporal noise process. Finally, we can apply the spatial inner product integration scheme described in \cite{Evans2019IDVRL}. Despite this, computations on a digital computer require spatial discretization. As a result, the portion of this optimization procedure dedicated to actuator co-design becomes a discrete optimization problem.

To see this more clearly, consider the \ac{1D} spatial continuum $D = [0,1]$ discretized into a 10 point \ac{1D} grid. Lets assume that an actuator is chosen to be placed at $x=0.25$. Even though the actuation function $m(x)$ may be Gaussian-like function, the majority of the actuation will be felt in between two grid points, namely 0.2 and 0.3. This problem is even more severe if the actuation function $m(x)$ is the indicator function, as there will be no actuation exerted on the field irrespective of the control signal magnitude. Denote the number of spatial discretization points as $J$ and the discretized problem domain grid as $\hat{D}$ composed of $J^3$ elements. The optimization problem becomes
\begin{equation} \label{eq:discrete_min_theta_hat}
    \begin{split}
        \min_{\vTheta, \vx}\;\; &L(\vTheta, \vx) \\
        \text{subject to}\;\; &\vx \in \hat{D}
    \end{split}
\end{equation}

This formulation is an accurate representation, yet limits gradient flow from the loss function back to the actuator design parameters. In order to maintain these gradients, we approximate \cref{eq:discrete_min_theta_hat} as follows. Define a one-to-one map $S:\hat{D} \rightarrow \Zb_+$, where $\Zb_+$ denotes positive integers. Applying the forward and inverse mapping produces a gradient-based parameter update of the form 
\begin{align}
    \vTheta^{(k+1)} &=  \vTheta^{(k)} - \gamma_\vTheta \nabla_{\vTheta}L(\vTheta^{(k)}, \vx^{(k)}) \label{eq:theta_update}\\
    \vx^{(k+1)} &= S^{-1}\bigg(R\Big(S\big(\vx^{(k)} - \gamma_\vx \nabla_\vx L(\vTheta^{(k)}, \vx^{(k)})\big)\Big)\bigg) \label{eq:x_update}
\end{align}
where $R(\cdot)$ simply rounds to the nearest integer, $\gamma_\vTheta$ is the learn rate for the \ac{ANN} parameters, $\gamma_\vx$ is the learn rate for the actuator design parameters, $\nabla_\vTheta$ denotes the gradient with respect to $\vTheta$, and $\nabla_\vx$ deontes the gradient with respect to $\vx$. This approach allows us to leverage well-known backprop-based algorithms such as ADA-Grad~\cite{duchi2011adaptive} and ADAM~\cite{kingma2014adam}.


\section{Algorithm and Network Architecture}\label{sec:algorithm}

As discussed previously, implementation of the above framework requires spatial and temporal discretization of the \acp{SPDE} discussed in the first section. With this in mind, we choose an \ac{ANN} for our nonlinear policy $\varphi(Z; \vTheta^{(k)})$. In this work we exclusively use \acp{FNN} for all of our experiments, and use physics-based models of each \acp{SPDE} to generate training data. Given that the proposed framework is semi-model-free, real system data can seamlessly replace the physics-based model as described in \cite{Evans2019IDVRL}. We only need prior knowledge of the flavor of noise and the actuator dynamics (i.e. $\vm(\vx)$). Also described is a sparse method for spatial integration that we apply here to maintain computational and memory efficiency. 

The resulting algorithm, which we call the \ac{ADPL} algorithm is shown in \cref{Algorithm1}. The inputs are time horizon ($T$), number of iterations ($K$), number of rollouts ($R$), initial state ($Z_0$), number of actuators ($N$), noise variance ($\rho$), time discretization ($\Delta t$), actuator variance ($\sigma_\mu$), initial network parameters ($\vTheta^{(0)}$), initial actuator locations ($\vx^{(0)}$), policy learn rate ($\gamma_\vTheta$), and actuator location learn rate ($\gamma_\vx$). For more information on $SampleNoise()$, refer to~\cite[Chapter 10]{lord_powell_shardlow_2014}.

In addition to obtaining gradients from any variant of \ac{SGD} via backprop paths, $GradientStep$ for actuator parameters also adds the current gradient to the previous gradient if the actuator location has not moved by the current gradient step. This is useful because when the actuators are near their optimal value, the gradients become very small, preventing the actuator locations from reaching optimal values. This could also be achieved by heuristically changing the learning rate $\gamma_\vx$. Also, a quadratic cost term was added to $L$ in order to penalize actuators from leaving the spatial domain, with a large coefficient to ensure this condition was rarely, if ever violated.

Note that there are separate learning rates for the policy and the actuators. This is because in practice the authors found that the optimization landscape is typically much more shallow for the actuator design than for the policy parameters. For most of the experiments, the actuator placement learn rate $\gamma_\vx$ was about 30 times larger than the policy network learn rate $\gamma_\vTheta$.

\begin{algorithm}[t]
 \caption{Actuator Design and Policy Learning}
 \begin{algorithmic}[1]
 \State \textbf{Function:} \textit{$\vTheta^* =$ \textbf{OptimizePolicyNetwork}($T$,$K$,$R$,$Z_0$,$N$,$\rho$, $\Delta t$,$\mu$, $\sigma_{\mu}$,$\vTheta^{(0)}$, $\vx^{(0)}$, $\gamma_\vTheta$, $\gamma_\vx$)}
 \For {$k=1 \; \text{to} \; K$}
  \State Compute $\vm(\vx), M(\vx)$ $\forall$ $\vy \in D$
 \For {$r=1 \; \text{to}\; R$}
 \For{$t=1 \;\text{to}\; T$}
     \State $\rd W_t \gets SampleNoise()$
     \State $Z_t \gets Propagate(Z_{t-1},\vTheta^{(k)}, \rd W_t)$ via \cref{eq:EB_Hilbert_form}
     \State $u_t \gets SparseForwardPass(\vTheta^{(k)},Z_t)$
 \EndFor
 \EndFor
 \State $J \gets StateCost(Z_0^T)$
 \State $N\gets \calN\big(u_0^T, \rd W,  \vm(\vx)\big)$ via \cref{eq:noise_inner_product}
 \State $P \gets \calP\big(u_0^T, M(\vx)\big)$ via \cref{eq:policy_inner_product}
 \State $E \gets \calE(J, N, P)$ as in \cref{eq:loss_function}
 \State $L \gets ComputeLoss(P,N,E)$ via \cref{eq:importance_sampled_cost}
 \State Compute $\nabla_\vTheta(L)$ via backprop
 \State Compute $\nabla_\vx(L)$ via backprop
 \State $\vTheta^{(k+1)} \gets GradientStep(L, \gamma_\vTheta)$ via \cref{eq:theta_update}
 \State $\vx^{(k)} \gets GradientStep(L, \gamma_\vx)$ via \cref{eq:x_update}
 \EndFor
 \end{algorithmic}
 \label{Algorithm1}
\end{algorithm}


\section{Simulation Results and Discussion}

\label{sec:simulation}
We applied our approach to four simulated \ac{SPDE} experiments to simultaneously place actuators and optimize a policy network. Each experiment used less than 32 GB RAM, and was run on a desktop computer with a Intel® Xeon(R) 12-core CPU with a NVIDIA GeForce GTX 980 GPU. Our code was written to operate inside a Tensorflow graph~\cite{tensorflow} to leverage rapid static graph computation, as well as sparse linear algebra operations used by $SparseForwardPass$. The first two experiments involved a reaching task, where the \acp{SPDE} is initialized at a zero initial condition over the spatial region, and must reach certain values at pre-specified regions of the spatial domain. The last two experiments involved a suppression task, where some non-zero initial condition must be suppressed on desired regions.

The data that was used for training was generated by a spatial central difference, semi-implicit time discretized version of each \ac{SPDE}. These schemes are described in detail in~\cite[Chapter 3 \& 10]{lord_powell_shardlow_2014}. Each experiment had all actuators initialized by sampling from a random distribution on $[0.4a, 0.6a]$, where $a$ denotes the spatial size. For 3500 iterations of our algorithm, run times for the most complicated system--the Euler-Bernoulli equation--were about 15 hours.

The first experiment was a temperature reaching task on the \ac{1D} Heat equation 
with homogeneous, $T=0$ Dirichlet boundary conditions, and is depicted in \cref{fig:heat}. The task was to raise the temperature at regions specified in green to specified values depicted in the figure. The algorithm was run for 3000 iterations. 

The next experiment was a velocity reaching task on the Burgers equation with non-homogenous Dirichlet boundary conditions, and is depicted in \cref{fig:burgers}. The Burgers equation has a nonlinear advection term, which produces an apparent rightward motion. The algorithm was run for 3500 iterations, and was able to take advantage of the advection for actuator placement in order to solve the task with lower control effort.

The heat equation is a pure diffusion \ac{SPDE}, while the Burgers equation shares the diffusion term with the Heat equation with an added advection term. The results of the Heat and Burgers experiments show actuator locations that take advantage of the natural behavior of each \acp{SPDE}. In the case of the Heat equation, actuators are nearby the desired regions such that the temperature profile can reach a flat peak of the diffusion at the desired profile. In the case of the burgers equation, the advection pushes towards the right end of the space, thus forming a wave front that develops at the right end, but leaves the left end dominated by the diffusion term. This is again reflected in the placement of actuators. The first actuator is nearby the desired region just as the actuators in the Heat \ac{SPDE}, while two of the actuators between the center and the right region are located to be able to control the amplitude and shape of the developing wave front so as to produce a flat peak that aligns with the desired region at the desired velocity. The central desired region is flanked on both sides by actuators that are nearly equidistant, in order to produce another desired flat velocity region at this location.

{\centering
\begin{figure*}[t!]
\begin{multicols}{2}
    
    \begin{subfigure}[h!]{1.0\textwidth}
    \hspace{-1.0cm}\includegraphics[width=0.65\textwidth]{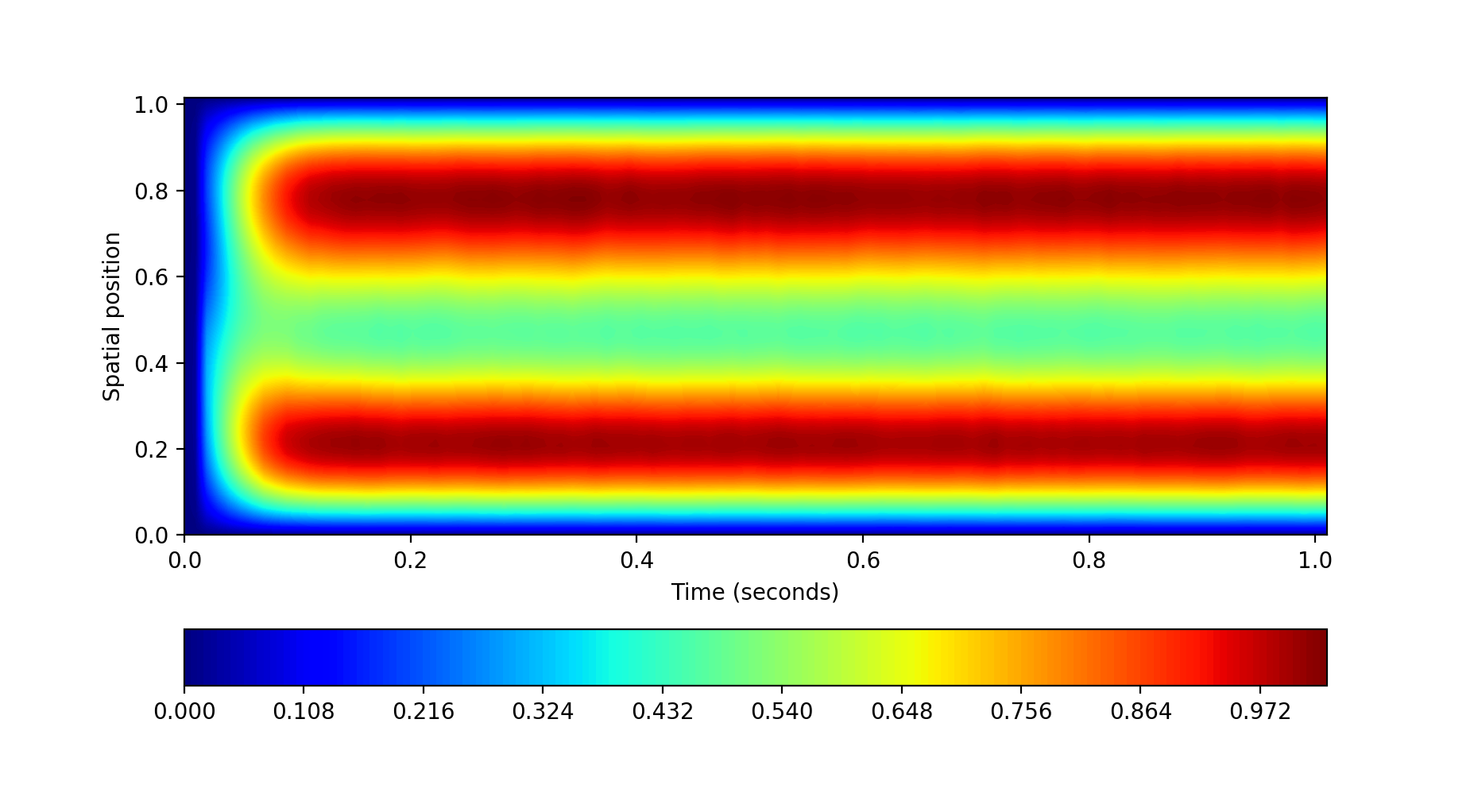}
    \end{subfigure}
    
    \begin{subfigure}[h!]{1.0\textwidth}
     \hspace{0.5
     cm}\includegraphics[width=0.5\textwidth]{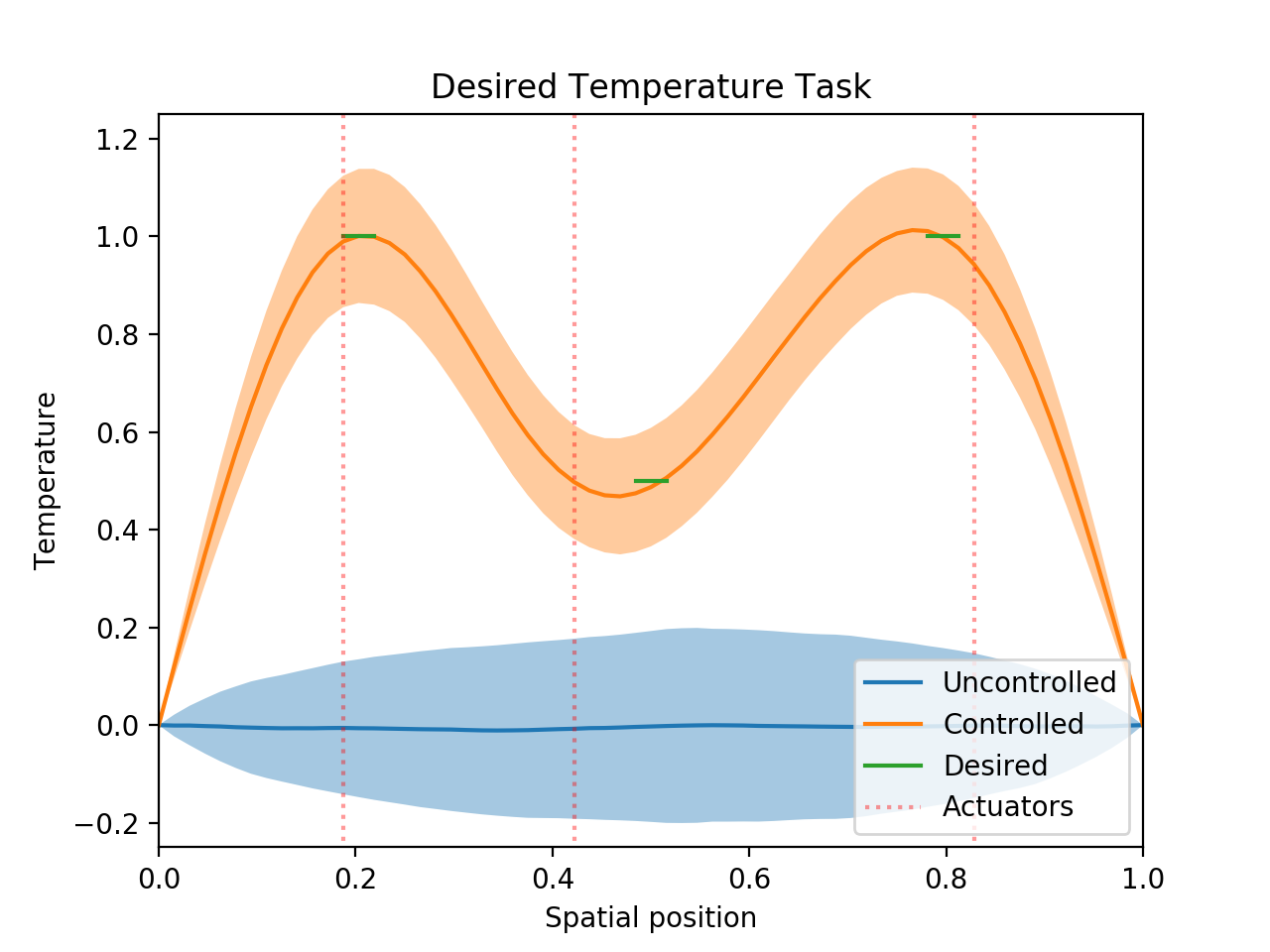}
    \end{subfigure}
    \end{multicols}
    \vspace{-0.5cm}
    \caption{Heat Equation Temperature Reaching Task. (left) controlled contour plot where color represents temperature, (right) final time snapshot comparing to the uncontrolled system. Mean trajectories are represented with a solid line, while a 2$\sigma$ standard deviation is represented with a shaded region.}
    \label{fig:heat}
\end{figure*}}

{\centering
\begin{figure*}[t!]
\begin{multicols}{2}
    
    \begin{subfigure}[h!]{1.0\textwidth}
    \hspace{-1.0cm}\includegraphics[width=0.65\textwidth]{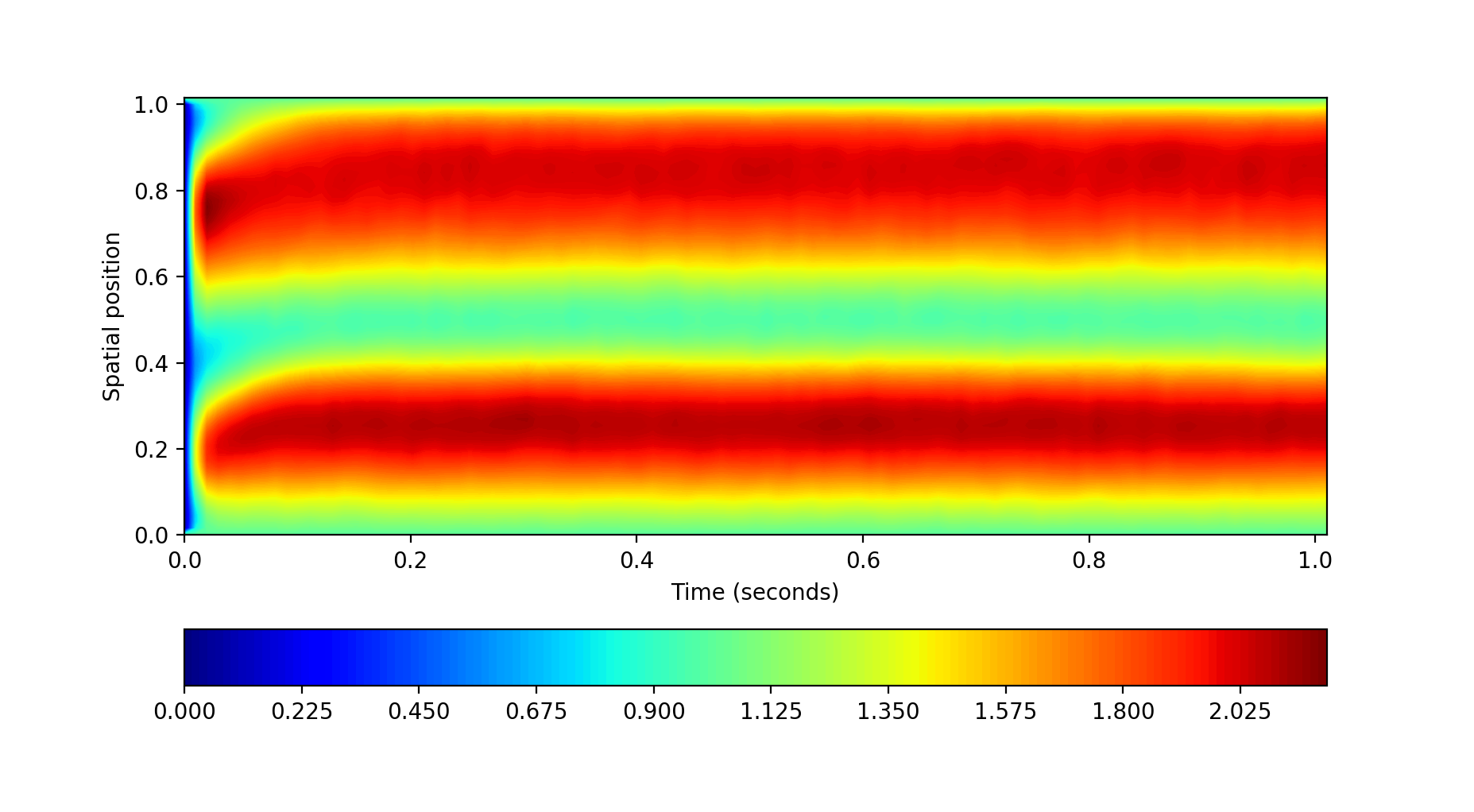}
    \end{subfigure}
    
    \begin{subfigure}[h!]{1.0\textwidth}
     \hspace{0.5
     cm}\includegraphics[width=0.5\textwidth]{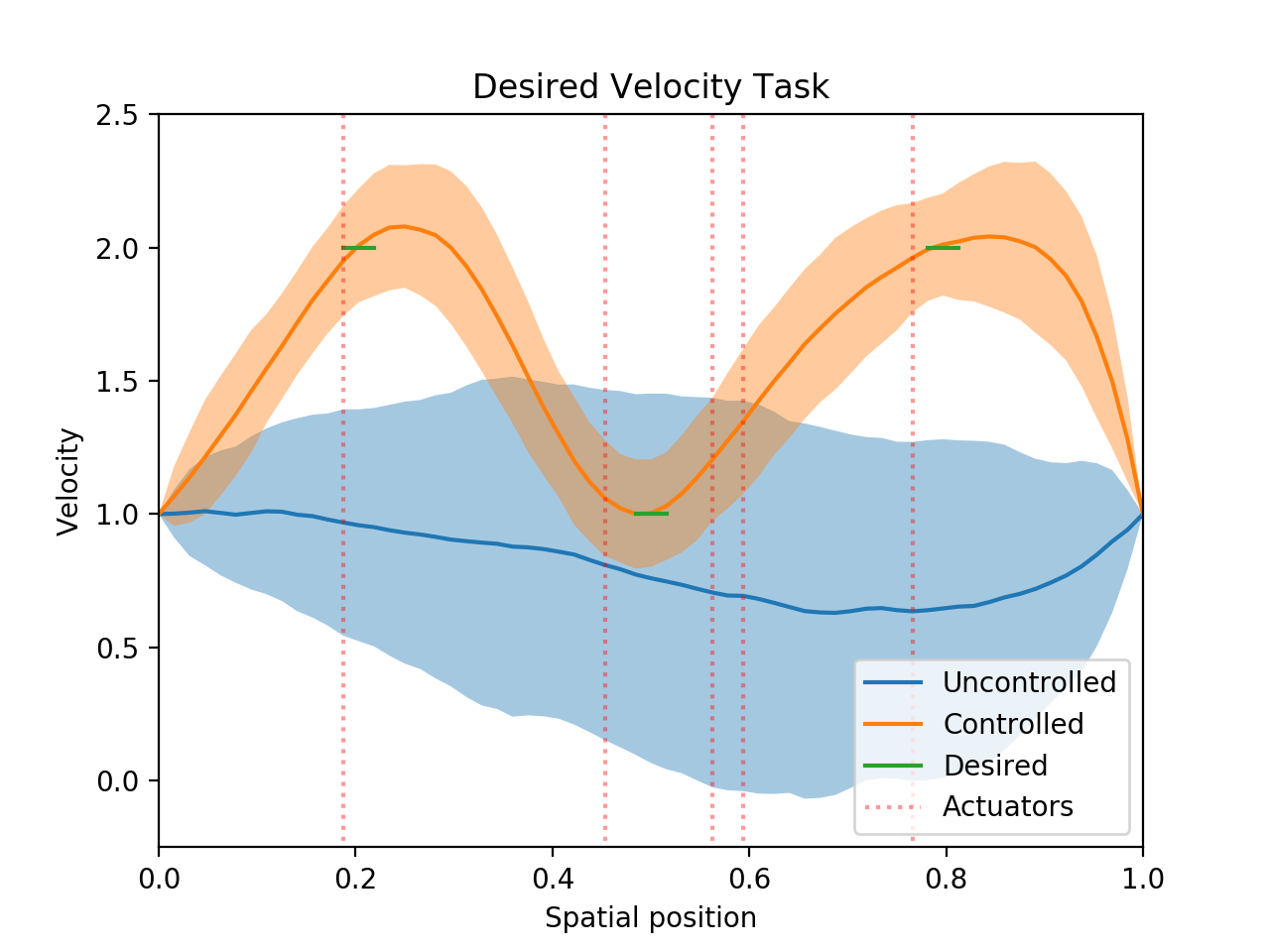}
    \end{subfigure}
    \end{multicols}
    \vspace{-0.5cm}
    \caption{Burgers Velocity Reaching Task. (left) controlled contour plot where color represents velocity, (right) final time snapshot comparing to the uncontrolled system. Mean trajectories are represented with a solid line, while a 2$\sigma$ standard deviation is represented with a shaded region.}
    \label{fig:burgers}
\end{figure*}}

{\centering
\begin{figure*}[!ht]
\begin{multicols}{2}
    
    \begin{subfigure}[h!]{1.0\textwidth}
    \hspace{-1.0cm}\includegraphics[width=0.65\textwidth]{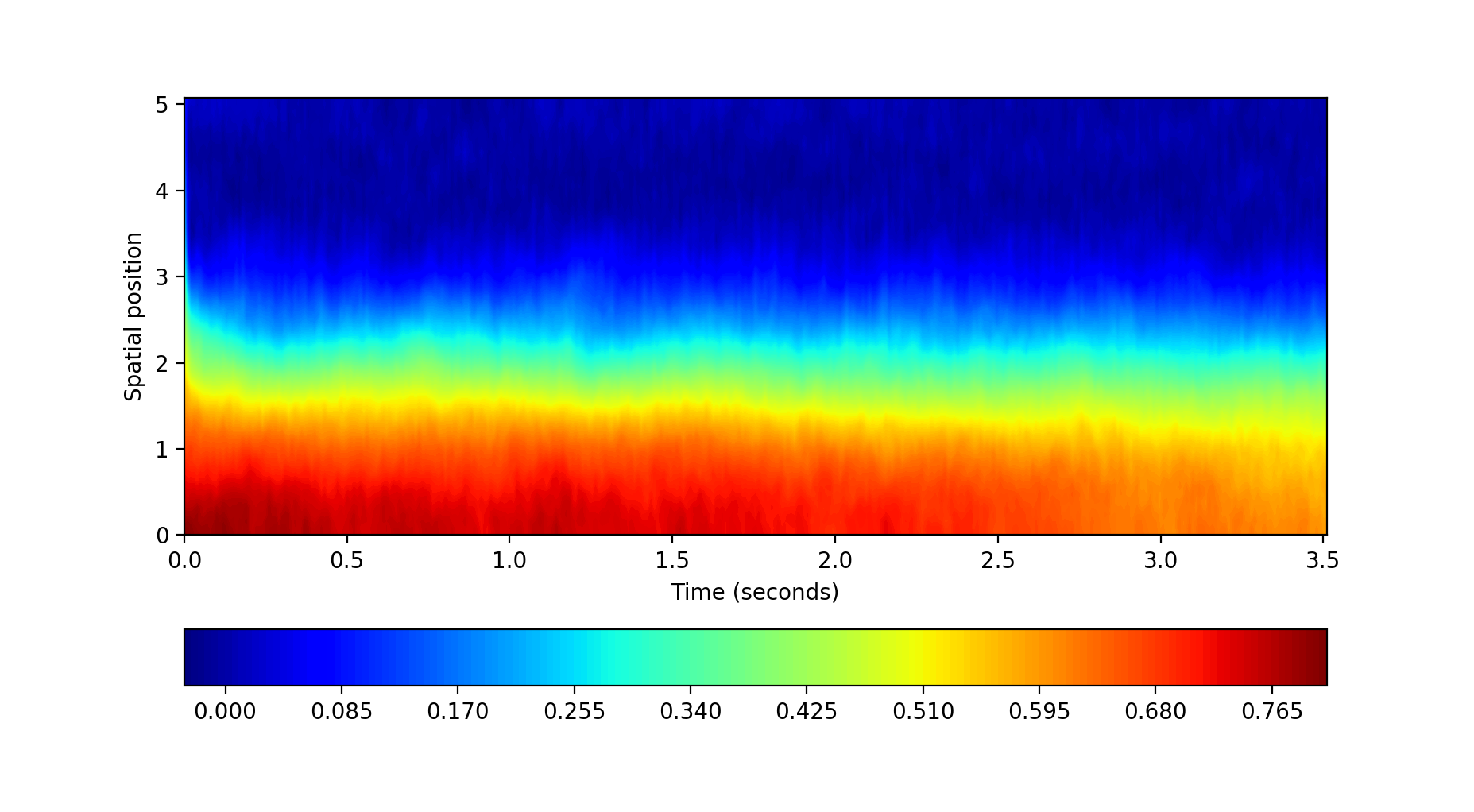}
    \end{subfigure}
    
    \begin{subfigure}[h!]{1.0\textwidth}
     \hspace{0.5
     cm}\includegraphics[width=0.5\textwidth]{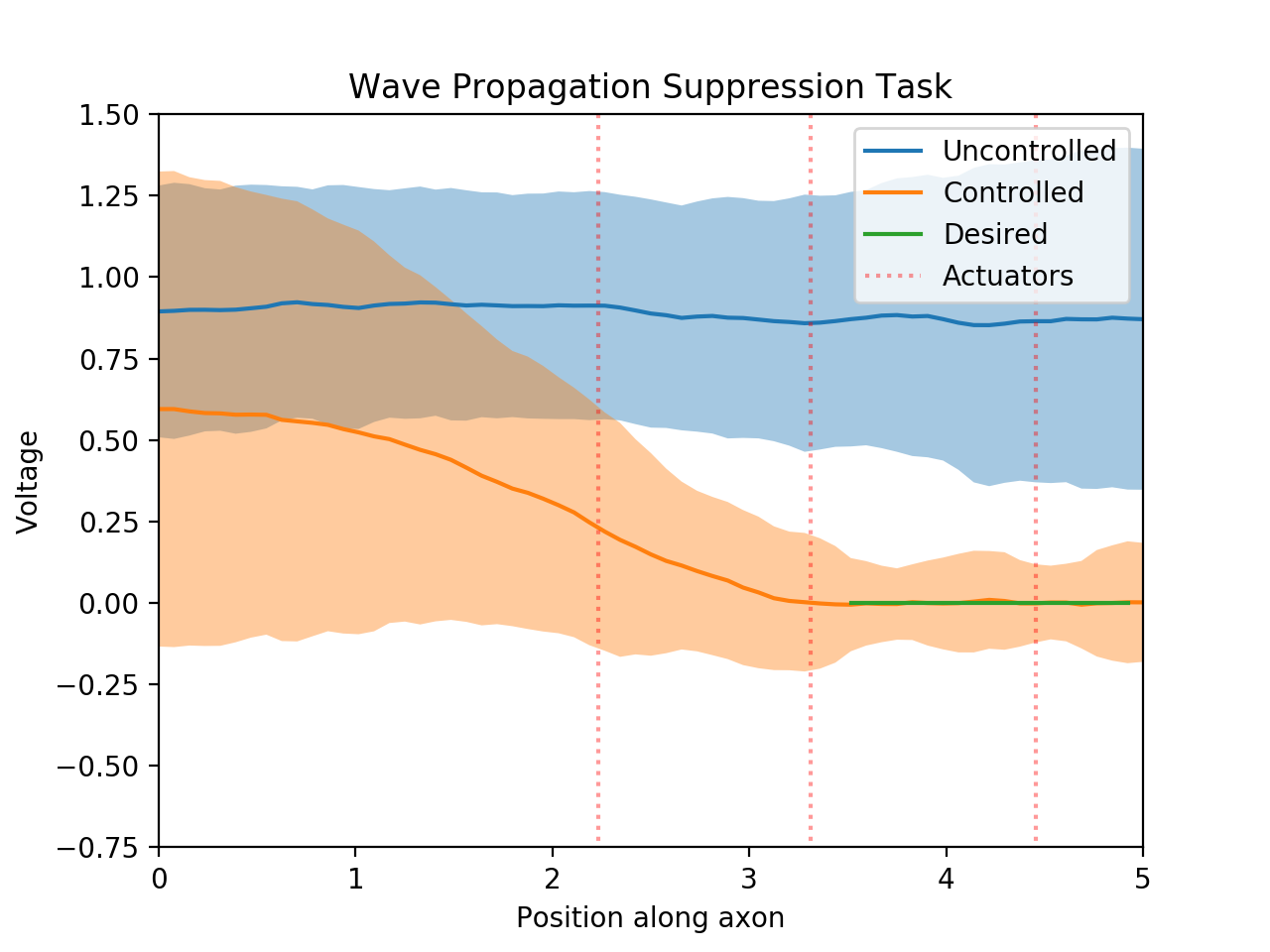}
    \end{subfigure}
    \end{multicols}
    \vspace{-0.25cm}
    \caption{Nagumo Suppression Task. (left) controlled contour plot where color represents voltage, (right) final time snapshot comparing to the uncontrolled system. Mean trajectories are represented with a solid line, while a 2$\sigma$ standard deviation is represented with a shaded region.}
    \label{fig:nagumo}
\end{figure*}}

{\centering
\begin{figure*}[t]
\begin{multicols}{3}
    
    \begin{subfigure}[h!]{1.0\textwidth}
    \hspace{-.35cm}\includegraphics[width=0.45\textwidth]{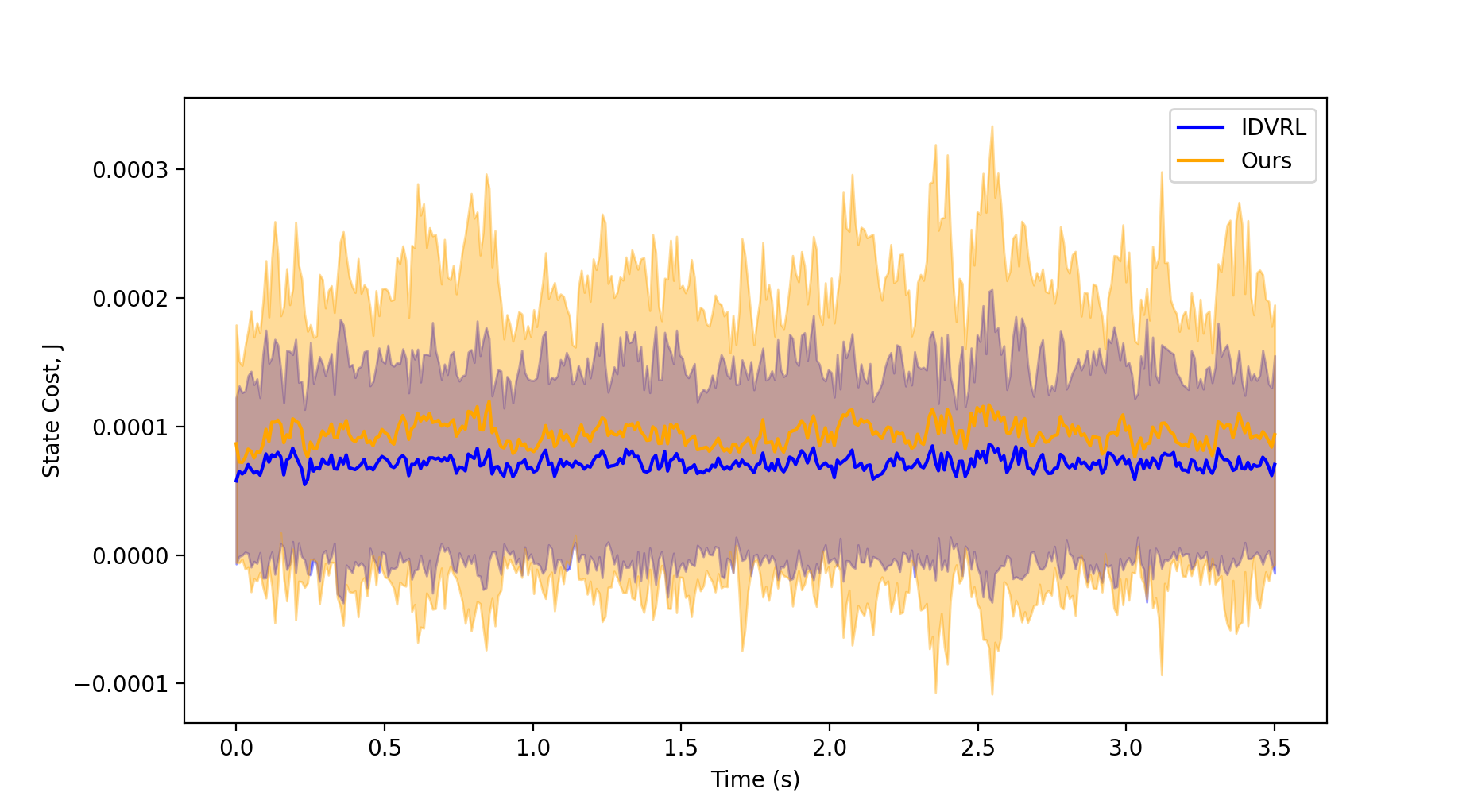}
    \end{subfigure}
    
    \begin{subfigure}[h!]{1.0\textwidth}
     \hspace{0.85cm}\includegraphics[width=0.329\textwidth]{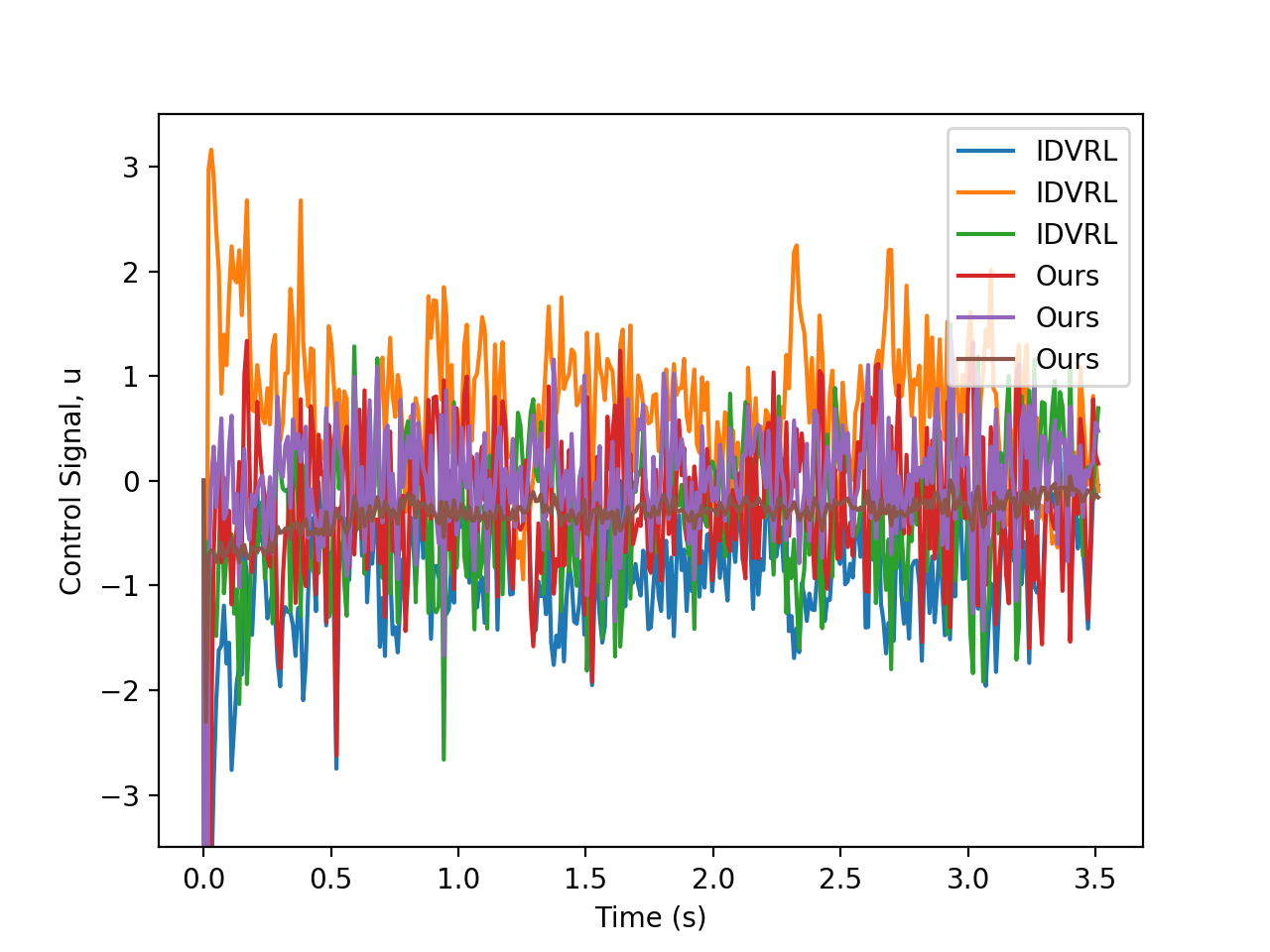}
    \end{subfigure}
    
    \begin{subfigure}[h!]{1.0\textwidth}
    \hspace{0.2cm}\includegraphics[width=0.329\textwidth]{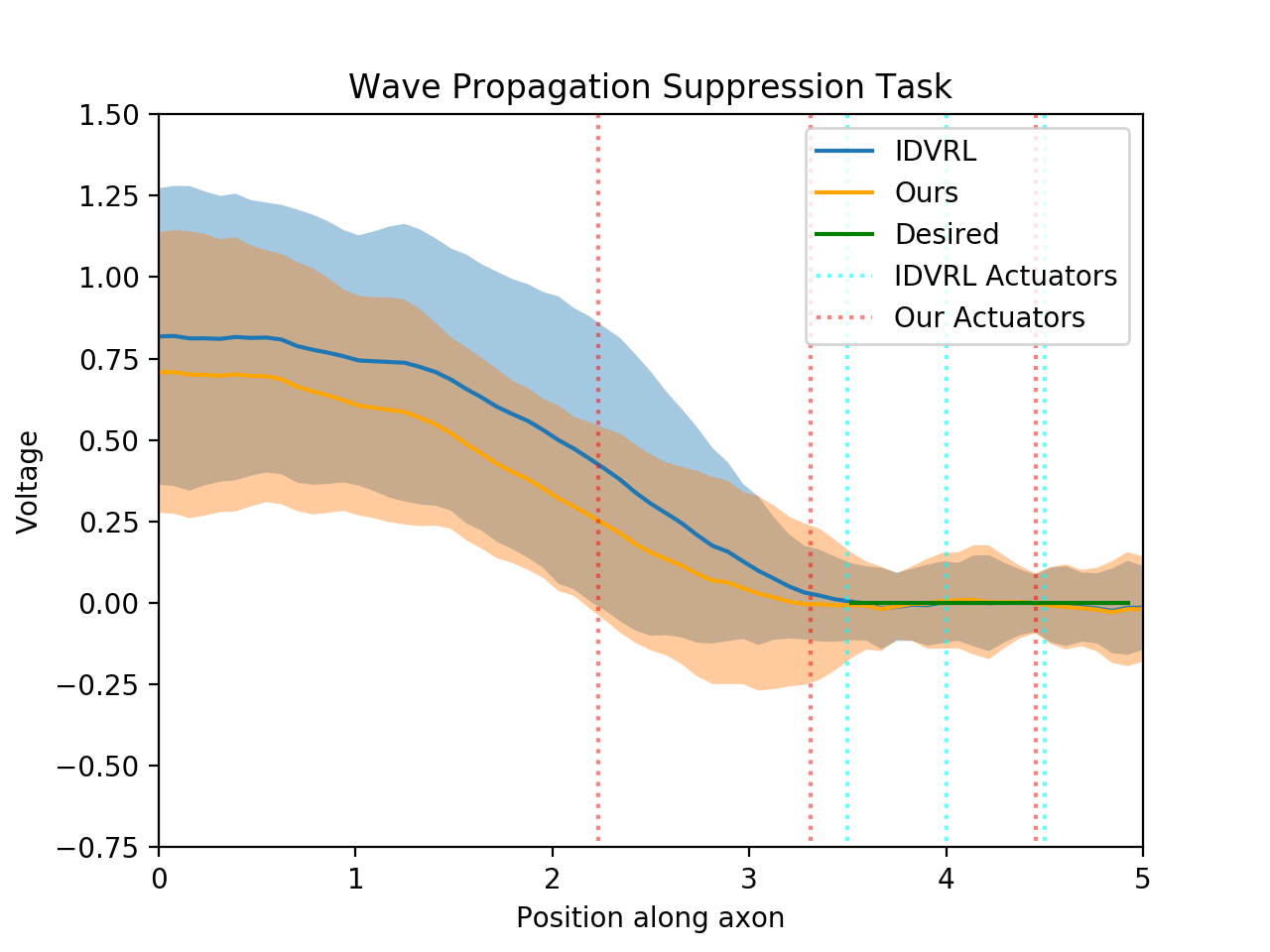}
    \end{subfigure}
    
    \end{multicols}
    \vspace{-0.25cm}
    \caption{Nagumo Suppression Task Comparison Plots. (left) controlled state cost plot, where solid lines denote mean, and shaded regions denote a $2 \sigma$ standard deviation, (center) Control signal comparison plot, where lines represent mean behavior, and (right) Final time snapshot comparing the actuators placed by our approach and actuators placed by a human expert with policy optimization by IDVRL.}
    \label{fig:nagumo_comparison}
\end{figure*}}

{\centering
\begin{figure*}[!h]
\begin{multicols}{2}
    
    \begin{subfigure}[h!]{1.0\textwidth}
    \hspace{-0.82cm}\includegraphics[width=0.614\textwidth]{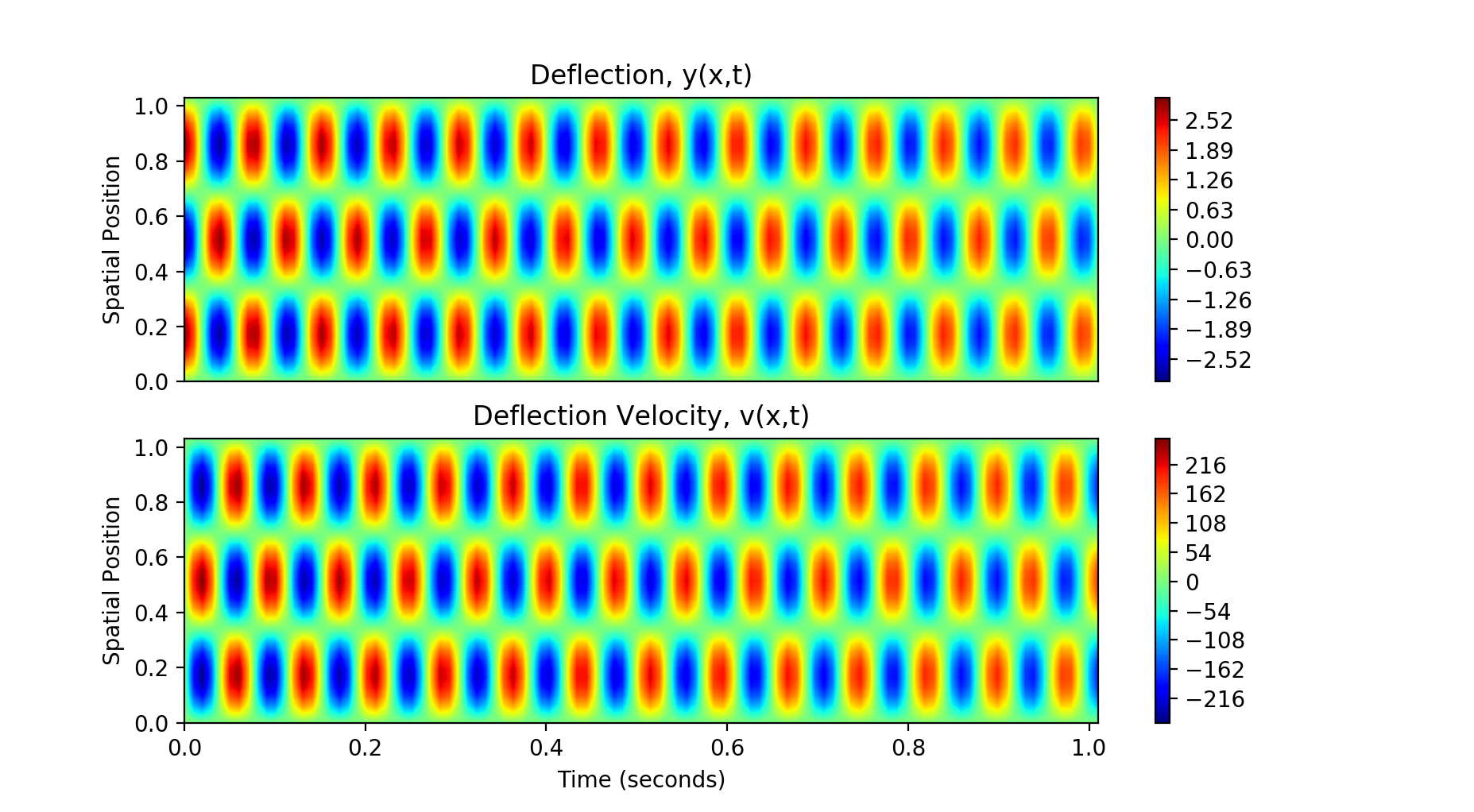}
    \end{subfigure}
    
    \begin{subfigure}[h!]{1.0\textwidth}
     \hspace{-0.65cm}\includegraphics[width=0.614\textwidth]{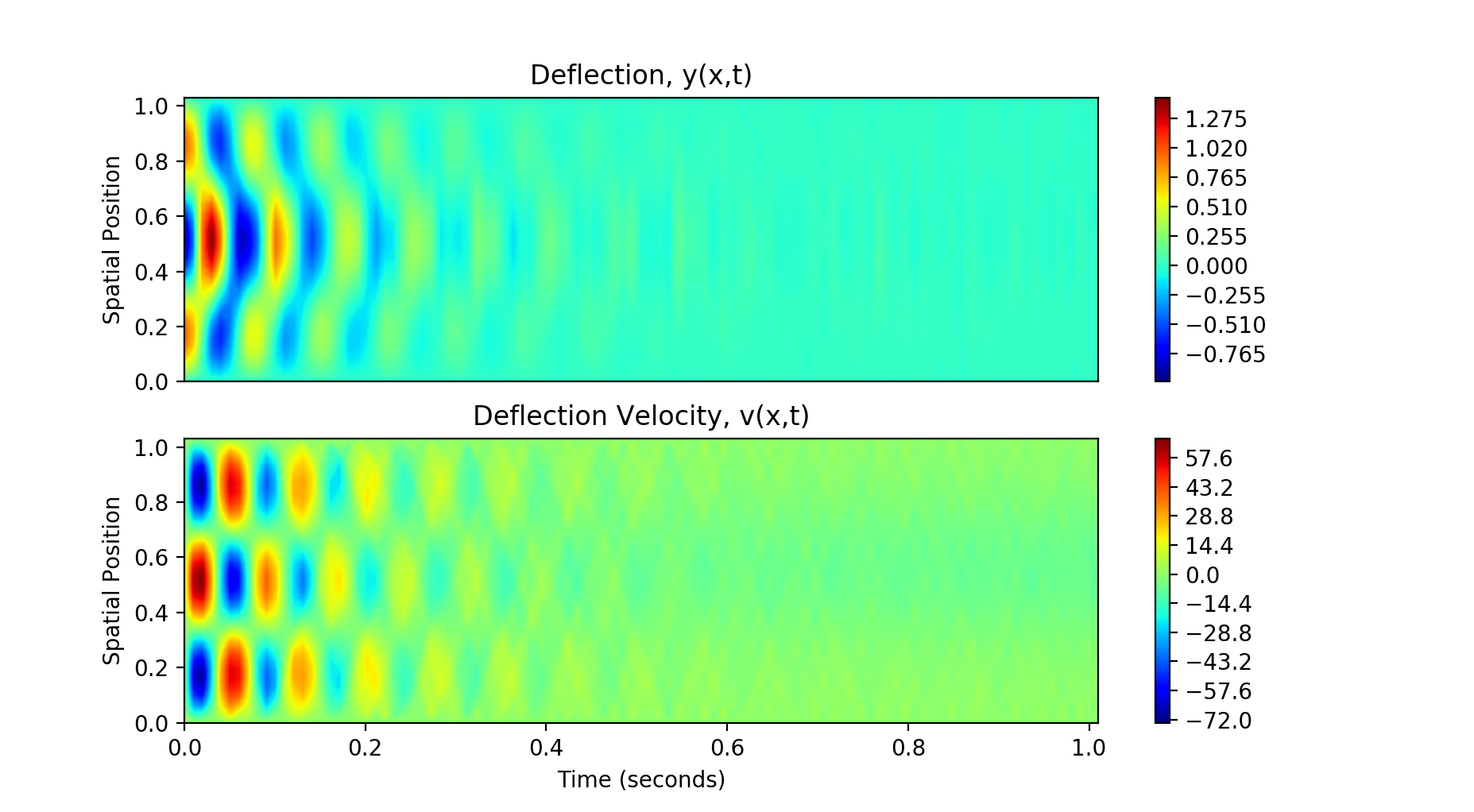}
    \end{subfigure}
    \end{multicols}
    \vspace{-0.25cm}
    \caption{Euler-Bernoulli Suppression Task. (left) Uncontrolled contour plot (right) Controlled contour plot. In both plots, color represents deflection on top, and deflection velocity on bottom.}
    \label{fig:EB}
\end{figure*}}

The third experiment was a voltage suppression task on the Nagumo equation with homogeneous Neumann boundary conditions, and is depicted in \cref{fig:nagumo}. The task was to suppress an initial voltage on the left end, that without intervention propagates toward the right end, as shown by the uncontrolled trajectories. The Nagumo equation is composed of a diffusive term and a 3rd-order nonlinearity, making this equation the most challenging from a nonlinear control perspective. Despite this, our approach was able to simultaneously place actuators and provide control such that the task was solved. The algorithm was run for 2000 iterations, and demonstrates actuator placement optimization that takes advantage of the natural system behavior. This task was also the most challenging due to the significantly longer planning horizon of 3.5 seconds, as compared to the 1.0 second planning horizon of all the other experiments.

In order to validate our proposed approach, we compared the actuator locations that the algorithm found after optimization to the actuator locations that were hand placed by a human expert for the simulated experiments conducted in related work \cite{Evans2019IDVRL}. To have a valid comparison, we ran the IDVRL algorithm for both sets of actuator locations. Figure \ref{fig:nagumo_comparison} reports these results. The left figure shows that the state costs for each is almost identical. Note that the scale here is $10^{-4}$. The center figure shows the control signals for each actuator, for each method, and demonstrates that for almost identical state cost values, the control effort for each actuator with our approach is lower on average. The calculated average control signal magnitudes for IDVRL are 3.3 times higher than our method. The third plot shows the voltage profile at the final time. We hypothesize that the lower control effort is due to the control over the shape of the spatially propagating signal, enabling it to have a smoother transition into the desired region. While the penalty of this actuator placement is a higher variance on the desired region, the choice appears correct given the result.

The final task was an oscillation suppression task on the Euler-Bernoulli equation with Kelvin-Voigt damping given in \cref{eq:EB_SPDE}, and is depicted in \cref{fig:EB}. As shown, the initial condition prescribes spatial oscillations, that then oscillate temporally. The second-order nature of the system creates offset and opposite oscillations in the velocity profile, that in turn produce offset and opposite oscillations in the position profile. Without interference, the oscillations proceed over the entire time window. As shown on the right, our approach successfully suppresses these oscillations, which die out quickly under the given control policy. In this experiment, the actuators did not leave the initialized actuator placement region $[0.4, 0.6]$ perscribed for all experiments.

The Euler-Bernoulli oscillation suppression task is in fact very challenging and complex. Producing a control signal at an actuator location that is in phase with the velocity oscillations will amplify the oscillations, leading to a divergence. The actuator location and control signal from the policy network must work in concert to produce a control signal out of phase with the velocity that matches its frequency, which is time varying due to control, as shown on the right side of \cref{fig:EB}.

Each of the above experiments has its challenges and in most cases the spatio-temporal problem space produces a joint policy optimization and actuator co-design problem that is littered with local minima. These experiments demonstrate that the proposed approach can jointly optimize a policy network and actuator design. These results and the overall performance of the algorithm are indicative that this approach may enable actuator design on problem spaces where a human has little to no prior knowledge to rely on in attempting to solve the problem by hand.


\section{Conclusion and Future Directions}
\label{sec:conclusion}

This work presents a framework for joint policy optimization and actuator co-design. We contribute a novel mathematical tool required for a measure theoretic treatment of second order \acp{SPDE}, namely the change of measure and associated proof. We also provide a mechanism for handling discrete parameter optimization with a gradient-based approach. We demonstrate the resulting algorithm on four different \acp{SPDE}, each with their own challenges and complexities. The last of which is the Euler-Bernoulli \ac{SPDE} which connects back with our goal of establishing capabilities for the further development of soft-body robotics. 

The presented approach is a new way of performing optimization and can lead to many applications in soft robotics, soft materials, morphing and continuum mechanism. The results are encouraging to the authors. We plan to scale  this approach to systems with higher spatial dimensionality (i.e. 2D and 3D), as well as investigate more complex forms of the above systems that are relevant to a future of soft robotics.

\bibliographystyle{ieeetran}
\balance
\bibliography{References}


\newpage
\section*{Supplementary Information}
\beginsupplement

\section{Additional Information on Simulations}
The following are details on each of our simulated experiments which will help in reproducing our results. In every simulated experiment we used spatial central difference discretization and semi-implicit time discretization. Our time discretization size was $\Delta t = 0.01$ throughout all experiments. All experiments initialized the actuator locations by sampling a uniform distribution on $[0.4a, 0.6a]$ at the policy network initialization phase. All experiments utilized \ac{FNN} for the nonlinear policy $\varphi(h;\Theta)$, with two hidden layers and each neuron applying ReLU activations. The number of neurons per layer matched the state dimension after discretization, which ended up being 64 for all experiments. All policy network weights were initialized with the Xavier initialization \cite{glorot2010understanding}. In every experiment the function $\vm(\vx)$ was modeled as a Gaussian-like exponential function with the means co-located with the actuator locations. Each experiment added a penalization term if actuators left the spatial region, with a coefficient of $10^3$. Each experiment considered a cost function of the form
\begin{equation}\label{supeq:cost}
    J := \sum_{t} \sum_{x} \;\kappa \big(h_{\text{actual}}(t,x) - h_{\text{desired}} (t,x)\big)^2 \cdot \mathbbm{1}_{S}(x)
\end{equation}
where $\mathbbm{1}_{S}(x)$ is defined by
\begin{equation} \label{supeq:1Dheat_indicator}
\mathbbm{1}_S(x) :=
\begin{cases}
1,  \quad \text{if } x \in S  \\
0, \quad  \text{otherwise},
\end{cases}
\end{equation}
where $S$ is the spatial subregion on which the desired profile is defined. These desired spatial regions were different for each experiment and are detailed below.

\subsection{Heat SPDE Reaching Task}

The controlled \ac{1D} heat \ac{SPDE} in Hilbert space form with homogeneous Dirichlet boundary conditions is given by
\begin{equation}
    \begin{split}
    \rd h(t,x) &= \epsilon h_{xx}(t,x) \rd t + G(t,h)\bigg(\,\vm(\vx)^{\top} \, \varphi(h;\Theta) \rd t  + \frac{1}{\sqrt{\rho}} \rd W(t) \bigg)\\
    h(t,0) &= h(t,a) = 0 \\
    h(0,x) &= h_0(x)
    \end{split}
\end{equation}
where $\epsilon$ is the thermal diffusivity parameter and was set to 1 for this experiment. The task is to achieve a desired temperature profile at three regions along the spatial domain of length $a=1.0$ with three actuators. The spatial region was discretized into 64 equally spaced grid points. This experiment used a noise term $\rho=10$. Here, the variance of the $N=3$ actuator Gaussian-like functions was set to $\sigma^2 = (0.1a)^2$ for all actuators. In this experiment $G(t,h)$ is the identity operator. The desired regions were defined by
\begin{equation}\label{supeq:1Dheat_regions}
\begin{split}
S_1 &= \{x \in D \mid x \in [0.18a, 0.22a] \} \text{ is the left spatial region}, \\
S_2 &= \{x \in D \mid x \in [0.48a, 0.52a] \} \text{ is the central spatial region}, \\
S_3 &= \{x \in D \mid x \in [0.78a, 0.82a] \} \text{ is the right spatial region}.
\end{split}
\end{equation}
and the desired temperature was set to $T=1.0$ for $x\in S_1 \cup S_3$, and $T=0.5$ for $x \in S_2$.

The network was trained using the ADAM optimizer for $3000$ iterations with $200$ trajectory rollouts sampled from the Heat \ac{SPDE} model per iteration. Each trajectory was $1.0$ seconds long. The policy learn rate $\gamma_\vTheta$ was set to $10^{-3}$ and the actuator location learn rate $\gamma_\vx$ was set to $3\times10^{-2}$.

\subsection{Burgers SPDE Reaching Task}
The controlled \ac{1D} Burgers \ac{SPDE} in Hilbert space form with non-homogeneous Dirichlet boundary conditions is given by
\begin{equation} \label{supeq:BurgersSPDE}
\begin{split}
\rd h(t, x) + h h_x(t, x)\rd t &= \epsilon h_{xx}(t,x) \rd t + G(t,h)\bigg(\vm(\vx)^{\top} \, \varphi(h;\Theta)\rd t \frac{1}{\sqrt{\rho}} \rd W(t) \bigg) \\
h(t,0) &= h(t,a) = 1.0\\ 
h(0,x) &= 0, \; \forall x \in (0,a)
\end{split}
\end{equation}
where the parameter $\epsilon$ is the viscosity of the medium. Equation \eqref{supeq:BurgersSPDE} considers a simple model of a 1D flow of a fluid in a medium with non-zero flow velocities at the two boundaries. The goal again here is to reach certain flow velocities at three specified spatial regions in the spatial domain of length $a=1.0$. The spatial region was discretized into 64 equally spaced grid points. Here we consider  $G(t,h)$ as an identity operator. This experiment used a noise term $\rho=10$. In this experiment the $N=5$ Gaussian-like functions used a variance of $\sigma^2 = (0.1a)^2$ for every actuator. The scaling parameter $\kappa$ was set to 100. The desired regions were given by \cref{supeq:1Dheat_regions}. The desired values were given by $h_{\text{desired}} (t,x)= 2.0 \;m/s$  for $x\;\in S_1 \cup S_3$, which is at the sides, and $ h_{\text{desired}} (t,x)= 1.0 \;m/s$  for $x\;\in S_2$, which is in the central region.

The network was trained using the ADAM optimizer for $3500$ iterations with $100$ trajectory rollouts sampled from the Burgers \ac{SPDE} model per iteration. Each trajectory was $1.0$ seconds long. The policy learn rate $\gamma_\vTheta$ was set to $10^{-3}$ and the actuator location learn rate $\gamma_\vx$ was set to $3\times10^{-2}$.

\subsection{1D Nagumo SPDE Suppression Task}
\label{supsec:NagumoSPDE} The \ac{1D} Nagumo equation in Hilbert space form with homogeneous Neumann boundary conditions is given by
\begin{align}
\rd h(t,x) &= \epsilon h_{xx}(t,x)\rd t + h(t,x)\big(1-h(t,x)\big)\big(h(t,x)-\alpha\big) \rd t\nonumber \\
&\quad + G(t,h)\bigg( \vm(\vx)^{\top} \, \varphi(h;\Theta)\rd t  + \frac{1}{\sqrt{\rho}} \rd W(t) \bigg) \nonumber \\ 
h_x(t,0) &= h_x(t,a) = 0\\ 
h(0,x) &= \bigg(1+\exp\Big(-\frac{2-x}{\sqrt[]{2}}\Big)\bigg)^{-1} \nonumber ,
\end{align}
where the parameter $\alpha=-0.5$ determines the speed of a wave traveling down the length $a=5.0$ of the axon and $\epsilon=1.0$ determines the rate of diffusion. We consider $G(t,h)$ as an identity operator. The goal of this task was to suppress the activation voltage that traveled rightward without control intervention. The $N=3$ actuator Gaussian-like functions used a variance of  $\sigma^2 = (0.1a)^2$ for each actuator. Here, the noise term was set to $\rho=10.0$ The spatial domain was discretized using a grid of 64 points. The cost function coefficient $\kappa$ was chosen as $10^{-3}$. The experiment used one desired spatial region $S = [0.7a, 0.99a]$, which is at the right end. 

The network was trained using the ADAM optimizer for $2000$ iterations with $100$ trajectory rollouts sampled from the Nagumo \ac{SPDE} model per iteration. Each trajectory was $3.5$ seconds long. The policy learn rate $\gamma_\vTheta$ was set to $10^{-3}$ and the actuator location learn rate $\gamma_\vx$ was set to $5\times10^{-2}$.

\subsection{Euler-Bernoulli SPDE Suppression Task}

The controlled \ac{1D} Euler-Bernoulli \ac{SPDE} with homogeneous Dirichlet boundary conditions is given in the paper, but repeated here for completess
\begin{equation} \label{supeq:EB_SPDE}
\begin{split}
&\partial_{tt} y + \partial_{xx} \big( \partial_{xx}y + C_d \partial_{xxt} y \big) + \mu \partial_t y = \vPhi + \frac{1}{\sqrt{\rho}} \partial_t W(t) \\
&y(t,0) = y(t,a) = 0 \\
&y(0,x) = y_0 \\
&\partial_t y(0,x) = v_0 \\
&\partial_{xx}(t,0) + C_d \partial_{xxt}y(t,0) = 0 \\
&\partial_{xx}(t,a) + C_d \partial_{xxt} y(t,a) = 0
\end{split}
\end{equation}
This is a second order system in time. Similar to a typical treatment of second order \acp{SDE}, we define a velocity state $v := \partial_t y$. After rewriting \eqref{supeq:EB_SPDE} and lifting it into Hilbert spaces, we have
\begin{equation} \label{supeq:EB_Hilbert_form}
\begin{split}
    &\rd Z = A Z \rd t + G\Big( \vPhi(t,Z,\vx; \vTheta^{(k)})\rd t + \frac{1}{\sqrt{\rho}}\rd W(t) \Big), \\
    &y(t,0) = y(t,a) = 0 \\
&y(0,x) = y_0 \\
&v(0,x) = v_0 \\
&\partial_{xx}(t,0) + C_d \partial_{xx}v(t,0) = 0 \\
&\partial_{xx}(t,a) + C_d \partial_{xx}v(t,a) = 0
\end{split}
\end{equation}
where $Z$ contains the both the displacement $y$ and displacement velocity $v$, $A: \calH^2 \rightarrow \calH^2$ is the linear operator $A = [0 \;\; I; -A_0 \;\; -C_d A_0 - \mu I]$, $G: \calH \rightarrow \calH^2$, and $\rd W(t)$ is a Cylindrical Wiener process. The Kelvin-Voigt damping term $C_d$ was set to $10^{-4}$, and the viscous damping term $\mu$ was set to $10^{-3}$. This was to almost completely remove damping of the initialized oscillations throughout the entire time window. The initial displacement was set by the function 
\begin{equation}
    y_0 = \sin\Big(\frac{3 \pi x}{a}\Big), \quad \forall \; x \in [0, a]
\end{equation}
As shown in the uncontrolled contour plot (fig. 5 in the main paper), this initial condition generates oscillations over the spatial length $a = 1.0$ that evolve temporally. The position and velocity spaces were each spatially discretized into 32 equal points, for a total state space dimensionality of 64. In this experiment, the $N=8$ actuator Gaussian-like functions used a variance of $\sigma^2 = (0.2a)^2$. The noise term $\rho$ was set to 1.0 for this experiment. The cost function coefficient $\kappa$ was set to $3 \times 10^{-4}$, and the desired spatial subregion was defined over the entire space $S=[0, 1.0]$.

The network was trained using the ADAM optimizer for $3500$ iterations with $250$ trajectory rollouts sampled from the Euler-Bernoulli \ac{SPDE} model per iteration. Each trajectory was $1.0$ seconds long. The policy learn rate $\gamma_\vTheta$ was set to $10^{-3}$ and the actuator location learn rate $\gamma_\vx$ was set to $5\times10^{-2}$.

\end{document}